\documentclass[a4paper,12pt]{article}

\usepackage[latin1]{inputenc}
\usepackage{latexsym,amsfonts,amssymb,amsmath}

\newtheorem{thm}{Theorem}[section] 

\newtheorem{prop}[thm]{Proposition} 
\newenvironment{proof}{\noindent{\bf Proof.}}{\noindent$\Box$\par\medskip} 
\newtheorem{rmk}[thm]{Remark} 
\newtheorem{dfn}[thm]{Definition}

\newcommand\sudda[1]{}
\newcommand\bpil{\buildrel b\over\longrightarrow} 
\newcommand\bprimpil{\buildrel
b'\over\longrightarrow} 
\newcommand\nollpil{\buildrel
0\over\longrightarrow}

\DeclareMathOperator{\Log}{Log} 
\DeclareMathOperator{\Lie}{Lie}
\DeclareMathOperator{\Exp}{Exp}
 
\DeclareMathOperator{\ord}{ord}

\newcommand\ideal[1]{\langle#1\rangle}

\DeclareMathOperator{\ho}{H}
\DeclareMathOperator{\hho}{HH}
\DeclareMathOperator{\hc}{HC}
\DeclareMathOperator{\hcc}{hc}
\DeclareMathOperator{\bb}{B}
\DeclareMathOperator{\cc}{C}

\DeclareMathOperator{\hcfree}{hcfree}
\DeclareMathOperator{\rhc}{\overline{HC}}

\DeclareMathOperator{\coker}{coker}
\DeclareMathOperator{\tor}{Tor}
\DeclareMathOperator{\ext}{Ext}

\DeclareMathOperator{\im}{im}

\DeclareMathOperator{\ch}{char}
\DeclareMathOperator{\len}{length}

\title{{\Huge Cyclic homology of algebras of global dimension at most two}}
\author{ 
\\Seminar on Cyclic Homology, November 7, 2017, held at\\
\\STOCKHOLM UNIVERSITY\\
\\by\\
\\Clas L\"ofwall}

\date{}

\begin{document}
\maketitle
\begin{abstract} We study graded connected algebras over a field of characteristic zero and give an explicit formula for the cyclic homology of a tensor algebra. By means of a slightly new definition of David Anick's notion "strongly free" we are able to prove that cyclic homology of an algebra of global dimension two is zero in homological degree greater than one and is zero also in homological degree equal to one in case the relations are monomials. We give also explicit formulas for the cyclic homology of a tensor algebra modulo one symmetric quadratic form.   
\end{abstract}

\section{Prerequisites}
\subsection{Definition of homology}
We will study $\mathbb N\times \mathbb Z_2$-graded connected algebras, $A=k\oplus I$ where $k$ is a field of characteristic zero, $I=\oplus_{q\ge1,\epsilon\in\mathbb Z_2}I_{q,\epsilon}$,\ $|I_{q,\epsilon}|<\infty$,
\ $I_{q,\epsilon}\cdot I_{q',\epsilon'}\subseteq I_{q+q',\epsilon+\epsilon'}$.
 We use $|a|$ as notation for the $\mathbb Z_2$-degree which is called the {\em sign degree}, the first degree is called the {\em weight}. The series for $I$ is defined as
$$
I(z,y)=\sum_{q\ge1,\epsilon\in\mathbb Z_2}|I_{q,\epsilon}|\in\mathbb Z[[z]][y]/(y^2-1)
$$
The ring $\mathbb Z[[z]][y]/(y^2-1)$ is isomorphic to the ring of $\mathbb Z^+\times \mathbb Z_2$-graded locally finite dimensional vector spaces with $\oplus$ and $\otimes$ as operations and with formal additive inverses adjoined.

We use the same notation for a space and its series.

\vspace{6pt}
\noindent\textbf{Example} $T(V)\cdot V+1=T(V)$ implies that $T(V)=1/(1-V)$

\vspace{6pt}
We have $\hc_0(I)=\rhc_0(A)=I/[I,I]$ where $[I,I]$ is the subspace of $I$ (not ideal!) generated by all commutators $ab-(-1)^{|a||b|}ba$. Taking modulo $[I,I]$ is the same as allowing circular permutation of monomials, e.g., $abcd=-dabc=cdab=-bcda$ in $I/[I,I]$ if $a,b,c,d$ are odd elements in $I$.

The Hochschild homology $\hho(A)$ is obtained by tensoring the two-sided bar resolution $(A\otimes I^{\otimes n}\otimes A,b')$ of $A$ as an $A$-bimodule with the bimodule $A$. Thus $\hho(A)$ may be seen as the homology of the complex ($n\ge0$, $I^{\otimes 0}=k$) 
$$
(I^{\otimes n}\oplus I^{\otimes{n+1}},\begin{pmatrix}-b'&0\\1-t&b\end{pmatrix})
$$
where
\begin{align*}
b'(a_0,\ldots,a_{n-1})&=\sum_{i=0}^{n-2}(-1)^{\epsilon(i)}(a_0,\ldots,a_ia_{i+1},\ldots,a_{n-1})\\
t(a_1,\ldots,a_n)&=(-1)^\eta(a_n,a_1,\ldots,a_{n-1})\\
b(a_0,\ldots,a_n)&=\sum_{i=0}^{n-1}(-1)^{\epsilon(i)}(a_0,\ldots,a_ia_{i+1},\ldots,a_n)+(-1)^\epsilon(a_na_0,a_1,\ldots,a_{n-1})
\end{align*}
and where
\begin{align*}
\epsilon(i)&=i+\sum_{r=0}^i|a_r|\\
\eta&=(|a_n|+1)\sum_{r=1}^{n-1}(|a_r|+1)\\
\epsilon&=|a_n|+(|a_n|+1)\sum_{r=0}^{n-1}(|a_r|+1)
\end{align*}
This is the mapping cone  in positive degrees of the map of complexes $(1-t): (I^{\otimes n},b')\to (I^{\otimes n},b)$ where $n\ge1$, or the total complex of the corresponding double complex,
$$
\begin{matrix}\ldots&\longrightarrow
\ \ I^{\otimes 4}&\bprimpil\ \ I^{\otimes 3}&
\bprimpil\ \
I^{\otimes2}&\bprimpil\ \ I&\nollpil k \\
\\
 &{\scriptstyle(1-t)}\
\downarrow &{\scriptstyle(1-t)}\
\downarrow&{\scriptstyle(1-t)}\ \downarrow&
\ \ \ \ \ {\scriptstyle0}\,\downarrow \\
\\
\ldots&\longrightarrow\ \ I^{\otimes
4}&\bpil\ \ 
I^{\otimes 3}&\bpil\ \ I^{\otimes2}&
\bpil\ \ I
\end{matrix}
$$

Taking homology first vertically and then horizontally   
 gives a long exact sequence by spectral sequence theory. There is only one differential $d_2$. 
\begin{align*}
\ldots\to&\ho_{n+2}(\coker(1-t))\xrightarrow{\ d_2\ }\ho_{n}(\ker(1-t))\to\hho_{n}(A)\to\\
&\ho_{n+1}(\coker(1-t))\xrightarrow{\ d_2\ }\ho_{n-1}(\ker(1-t))\to\\
& \hho_{n-1}(A)\to\ho_{n}(\coker(1-t))\to\ldots
\end{align*} 
The homology of the complex $\cc_t(A)=(\coker(1-t),\bar b)$ is by definition $\rhc(A)$, with a degree shift, so $\rhc_n(A)=\ho_{n+1}((\coker(1-t),\bar b))$. The norm map defines an isomorphism $\sum t^i:(\coker(1-t),\bar b)\to (\ker(1-t),b')$ and hence $\rhc(A)$ may also be seen as the homology of $(\ker(1-t),b')$.

In our case the differential $d_2=0$, so the long exact sequence is short:
$$
0\longrightarrow\rhc_{n-1}(A)\longrightarrow\hho_{n}(A)\longrightarrow\rhc_{n}(A)\longrightarrow0\quad\text{for}\quad n>0
$$
For $n=0$ we have $\hho_0(A)=k\oplus\rhc_0(A)=k\oplus I/[I,I]$.

The series for $\hho(A)$ and $\rhc(A)$ have three variables $x$ for the homological degree, $z$ for the weight and $y$ for the sign degree. The series belong to the ring $\mathbb Z[[x,z]][y]/(y^2-1)$. The map $\rhc_{n-1}(A)\,\to \,\hho_n(A)$ has both homological degree and sign degree equal to one, hence we have the following relation
\begin{align}\label{cyc}\hho(A)(x,z,y)=1+(1+xy)\rhc(A)(x,z,y)\end{align}
where
\begin{align*}
\hho(A)(x,z,y)&=\sum_{q\ge n\ge0,\epsilon=0,1}|\hho_{n,q,\epsilon}(A)|x^nz^qy^\epsilon\\
\rhc(A)(x,z,y)&=\sum_{q-1\ge n\ge0,\epsilon=0,1}|\rhc_{n,q,\epsilon}(A)|x^nz^qy^\epsilon.
\end{align*}

For commutative algebras $A,B$ we have
\begin{align}\label{tensor}
\hho(A\otimes B)=\hho(A)\otimes \hho(B)
\end{align}

\subsection{Koszul duality}
There is a subcomplex of the two-sided bar resolution of the form $A\otimes (A^!)^*\otimes A$, where $A^!$ is the Koszul dual of $A$. The differential in this subcomplex has only two terms with no middle terms. In fact, $(A^!)^*$ in homological degree $n$ is exactly the subspace of $I_1^{\otimes n}$ consisting of the elements for which all middle terms of the differential are zero. Thus
$$
(A^!)^*=\bigcap_{i=0}^{n-2} I_1^{\otimes\,i}\otimes\ker (\mu)\otimes I_1^{\otimes (n-2-i)}
$$
and where $\mu$ is multiplication $I_1\otimes I_1\to I_2$.

If $A$ is Koszul, then this complex is exact (the opposite is also true) and we may use this to compute the Hochschild homology as the homology of $A\otimes (A^!)^*$ where the differential again has two terms but now one of them is circulating. Taking the vector space dual of the complex we get the complex $A^*\otimes A^!$ with isomorphic homology and since $(A^!)^!=A$, this is the complex we get if we compute the Hochschild homology of 
$A^!$ with the same method. 

Hence we get that $\hho(A)$ and $\hho(A^!)$ are in a sense dual. One has to be careful with the gradings, the homological degree of 
$A_r\otimes ((A^!)^*)_p$ is $p$ and the weight is $p+r$, while in 
$(A^*)_r\otimes (A^!)_p$ the homological degree is $r$ and the weight is $p+r$. The sign degree is preserved in passing from one complex to the other. We thus get the following relation
\begin{align}\label{hhkos}
\hho_{p,q,\epsilon}(A^!)\cong\hho_{q-p,q,\epsilon}(A)
\end{align}
yielding the following formula of Hilbert series
\begin{align}\label{kos}
\hho(A^!)(x,z,y)=\hho(A)(1/x,xz,y).
\end{align}
We may obtain a similar formula for $\rhc$ using (\ref{cyc}),
\begin{align}\label{hckos}
\rhc(A^!)(x,z,y)=\frac{y}{x}\rhc(A)(1/x,xz,y),
\end{align}
which gives the following 
\begin{align}\label{kosrel}
\rhc_{p,q,\epsilon}(A^!)\cong\rhc_{q-p-1,q,\epsilon+1}(A).
\end{align}
As an application, we get an easy proof of the Hochschild-Kostant-Rosenberg theorem in the graded case. Let $A=k[x_1,\ldots,x_n]$ be a polynomial ring in $n$ even variables of weight one. Then it is easy to see that the differential in the two-sided Koszul complex $A\otimes (A^!)^*$ is zero giving that, as a vector space,
$$
\hho(A)=k[x_1,\ldots,x_n]\otimes k[\bar x_1,\ldots,\bar x_n]
$$
where $\bar x_i$ are odd anti-commutative variables of homological degree and weight one. This gives the following series in three variables,
\begin{align}\label{hkr}
\hho(A)(x,z,y)=(1+yxz)^n/(1-z)^n.
\end{align}
Using (\ref{kos}) we get
$$
\hho(A^!)(x,z,y)=(1+yz)^n/(1-xz)^n
$$
which (as expected) is the series for the exterior algebra on $n$ odd variables of weight one tensor with the polynomial algebra on $n$ even variables of homological degree and weight one.

Putting $p=0$ in (\ref{hhkos}) we get $\oplus_q\hho_{qq}(A)\cong A^!/[A^!,A^!]$. This is in fact true in general and may be compared with the result that
$\oplus_q\ext^{qq}(A)=A^!$.
\begin{prop} Let $A$ be a $k$-algebra. Then 
$$
\oplus_q\hho_{qq}(A)\cong (A^!/[A^!,A^!])^*
$$
and if $A$ is commutative, the isomorphism is an isomorphism as algebras, where the right hand side is an algebra since $[A^!,A^!]$ is a co-ideal in the Hopf algebra $A^!$. 
\end{prop}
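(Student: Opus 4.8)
The plan is to compute the diagonal piece $\hho_{q,q}(A)$ directly from the mapping-cone complex $(I^{\otimes n}\oplus I^{\otimes(n+1)},\partial)$ defining $\hho(A)$. Since $I^{\otimes m}$ lives in weights $\ge m$, the weight-$q$ part of this complex vanishes in homological degrees $>q$ and in degree $q$ reduces to $I_1^{\otimes q}$; hence $\hho_{q,q}(A)$ is the kernel of the differential out of that top spot, which sends $a\in I_1^{\otimes q}$ to $(-b'a,(1-t)a)$. Now $b'a$ lands in the weight-$q$ part $\bigoplus_{j=1}^{q-1}I_1^{\otimes(j-1)}\otimes I_2\otimes I_1^{\otimes(q-1-j)}$ of $I^{\otimes(q-1)}$, where the summands are independent, so $b'a=0$ precisely when $a\in\bigcap_{i=0}^{q-2}I_1^{\otimes i}\otimes\ker\mu\otimes I_1^{\otimes(q-2-i)}=((A^!)^*)_q$, the term of the two-sided Koszul complex recalled above. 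Therefore
$$\hho_{q,q}(A)=((A^!)^*)_q\cap\ker(1-t)\ \subseteq\ I_1^{\otimes q}.$$
(The same identity also drops out of the short exact sequence $0\to\rhc_{q-1}(A)\to\hho_q(A)\to\rhc_q(A)\to0$ once one notes that $\rhc_q(A)$ vanishes in weight $q$.)

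Next I would dualize. By the definition of the quadratic dual the perfect pairing of $I_1^{\otimes q}$ with $(I_1^*)^{\otimes q}$ restricts to a perfect pairing of $((A^!)^*)_q$ with the quotient $(A^!)_q$ of $(I_1^*)^{\otimes q}$, identifying $((A^!)^*)_q$ with $((A^!)_q)^*$. Under this pairing the adjoint of $1-t$ is $1-t^{-1}$ (up to a sign that does not matter here), and $\ker(1-t^{-1})=\ker(1-t)$ since $t$ has finite order; thus an element of $((A^!)^*)_q$ lies in $\ker(1-t)$ exactly when it kills the image of $1-t$ acting on $(I_1^*)^{\otimes q}$. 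Finally that image maps onto $[A^!,A^!]_q$ under $(I_1^*)^{\otimes q}\twoheadrightarrow(A^!)_q$: a cyclic rotation of a monomial $\psi_1\cdots\psi_q$ differs from it by an iterated graded commutator, and conversely every graded commutator of two monomials is, up to sign, such a difference since $A^!$ is generated in weight one. Hence $\hho_{q,q}(A)=([A^!,A^!]_q)^{\perp}=\bigl((A^!/[A^!,A^!])_q\bigr)^*$, and summing over $q$ proves the first assertion.

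For the algebra statement with $A$ commutative, recall that $\hho(A)$ carries the graded-commutative shuffle product; it respects weight and homological degree, so it restricts to an algebra structure on the diagonal $\oplus_q\hho_{q,q}(A)$, and on chains it is the signed shuffle product on $\bigoplus_q I_1^{\otimes q}$. Dually, the generators of the Hopf algebra $A^!$ lie in weight one and are therefore primitive, so the coproduct of $A^!$ dualizes to a shuffle-type product on $(A^!)^*$; and since $[A^!,A^!]$ is a coideal in any bialgebra (a one-line check on $\Delta$ of a graded commutator), the quotient coalgebra $A^!/[A^!,A^!]$ has for dual the subalgebra $([A^!,A^!])^\perp=\oplus_q\hho_{q,q}(A)$ of that shuffle algebra. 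The isomorphism of the first part is then an algebra isomorphism because on both sides the multiplication is carried by the same shuffle combinatorics. The main obstacle, and the one place where genuine care is needed throughout, is the sign bookkeeping — how the $\mathbb Z_2$ sign degree (including the suspension implicit in the bar construction, which shifts parities by the homological degree) interacts with the cyclic operator $t$ and with the shuffle permutations — together with making precise, in the last step, the match between the coproduct of $A^!$ and the Hochschild shuffle product.
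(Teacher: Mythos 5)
Your argument is correct: the paper states this proposition with the proof omitted, and your computation of $\hho_{q,q}(A)$ as $\ker b'\cap\ker(1-t)\subseteq I_1^{\otimes q}$ (using that the weight-$q$ part of $I^{\otimes(q-1)}$ splits according to which factor carries weight two, so that $\ker b'$ is exactly $((A^!)^*)_q$), followed by dualizing against $(I_1^*)^{\otimes q}\twoheadrightarrow (A^!)_q$ and identifying the image of $\im(1-t)$ with $[A^!,A^!]_q$, is precisely the argument implicit in the paper's preceding discussion of the Koszul subcomplex and of $\rhc$ as the homology of $(\ker(1-t),b')$. The only genuinely sketchy part is the algebra statement, where you correctly locate the mechanism --- both multiplications are the shuffle product on $I_1^{\otimes q}$, one from the Hochschild shuffle product of the commutative algebra $A$, the other dual to the coproduct of the primitively generated Hopf algebra $A^!$ --- but defer the sign bookkeeping; since the paper offers no proof at all, this is an acceptable level of detail, though a careful writeup should verify that the suspension signs in the bar complex match the Koszul signs in the dual of $\Delta$.
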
\endproof

\subsection{Free models}
A free model of a $k$-algebra $A$ is a differential graded algebra $(T(V),d)$ together with a surjective map $(T(V),d)\to A$ which induces an isomorphism in homology. Here $V$ has homological degree $\ge0$ and weight $\ge1$ and a sign degree, $d$ is a derivation of homological degree $-1$ which preserves weight and changes the sign degree, $A$ has differential zero and is concentrated in homological degree zero. 
There is always a free model for $A$ (even a minimal one, but that is not important for us). 

\vspace{6pt}
\noindent\textbf{Example} $A=k\ideal{x_1,x_2}/(x_1x_2)$. Then a free model is (as follows from Proposition \ref{freeext})
$$
k\ideal{x_1,x_2,S;\ dx_1=dx_2=0,\ dS=x_1x_2}
$$
The map to $A$ maps $S$ to 0.

\vspace{6pt}

There is also a notion of relative model for a map $f:A\to B$. This is a differential graded algebra which is a free extension of $A$, $(A*T(V),d)$, together with a surjective map $(A*T(V),d)\to B$ inducing an isomorphism in homology and which equals $f$ when it is restricted to $A$. We will use the notation $A\ideal{V}$ for $A*T(V)$.

The relative cyclic homology, $\rhc(A\to B)$, of a map $f:A\to B$ is defined as the homology of the mapping cone for the map of complexes 
$\cc_t(f):\cc_t(A)\to \cc_t(B)$. The exact sequence of complexes gives rise to a long exact sequence (where $s$ denotes shift of sign degree),
\begin{align}\label{longex}
&\ldots\to\rhc_n(A)\to\rhc_n(B)\to\rhc_{n}(A\to B)\to\notag\\
&s\rhc_{n-1}(A)\to s\rhc_{n-1}(B)\to s\rhc_{n-1}(A\to B)\to\rhc_{n-2}(A)\to\ldots
\end{align} 

Free models may be used to compute cyclic homology (and also $\tor^A(k,k)$ and $\ext_A(k,k)$). Suppose $(T(V),d)$ is a model for $A$ and that $(A\ideal{V},d)$ is a relative model for a map $A\to B$, then we have the following result, which may be proven by a spectral sequence argument, using the fact that $\rhc_n(T(V))=0$ for $n>0$.
\begin{prop}\label{model}
\begin{align*}
\rhc(A)&=\ho(T(V)/([T(V),T(V)]+k),\bar d)\\
\rhc(A\to B)&=\ho(A\ideal{V}/([A\ideal{V},A\ideal{V}]+A),\bar d)
\end{align*}
where $\bar d$ is the induced differential ($d$ maps a commutator to a linear combination of commutators).
\end{prop}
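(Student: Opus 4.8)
The plan is to exhibit each of the two formulas as an instance of the following principle: for a differential graded algebra $R=(T(V),d)$ (or a free extension $A\ideal{V}=(A*T(V),d)$) the reduced cyclic complex $\cc_t(R)$ carries, besides the bar differential $\bar b$, the internal differential $d$ extended to the tensor powers $\bar R^{\otimes m}$ with the usual Koszul signs; since $\bar b$ lowers the tensor length and $d$ preserves it, together they make $\cc_t(R)$ a bicomplex whose total homology computes $\rhc(R)$. One then compares the two spectral sequences of this bicomplex. Throughout one works one weight at a time, so that in each weight the bicomplex is finite dimensional and both spectral sequences converge.

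For the first formula, take $R=(T(V),d)$ a free model of $A$. Running the spectral sequence that first takes homology with respect to $d$: since $k$ is a field, the K\"unneth formula identifies the $d$-homology of $\bar R^{\otimes m}$ with $(A_{>0})^{\otimes m}$, compatibly with the cyclic operator $1-t$ and with $\bar b$; hence after this step the complex is $\cc_t(A)$, the next page is $\rhc(A)$, and, since $A$ sits in internal degree zero, this page occupies a single line, so the sequence degenerates and the total homology is $\rhc(A)$. Running instead the spectral sequence that first takes homology with respect to $\bar b$: what is computed is the reduced cyclic homology of the underlying graded tensor algebra $T(V)$, which by the quoted fact $\rhc_n(T(V))=0$ for $n>0$ is concentrated in the single line $\rhc_0(T(V))=T(V)/([T(V),T(V)]+k)$; the surviving differential on this line is the one induced by $d$, namely $\bar d$, and the sequence again degenerates. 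Comparing the two abutments gives $\rhc(A)=\ho(T(V)/([T(V),T(V)]+k),\bar d)$.

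The second formula is obtained the same way, with $\cc_t(R)$ replaced by the mapping cone of $\cc_t(A)\to\cc_t(A\ideal{V})$ and $d$ the differential of the relative model $(A\ideal{V},d)$. Taking internal homology first and using that $A\ideal{V}\to B$ is a quasi-isomorphism turns $\cc_t(A\ideal{V})$ into $\cc_t(B)$, so the next page is $\cone(\cc_t(A)\to\cc_t(B))$ with homology $\rhc(A\to B)$, concentrated in internal degree zero, hence degenerate. Taking $\bar b$-homology first yields the relative reduced cyclic homology of the graded algebras $A\hookrightarrow A\ideal{V}=A*T(V)$, whose degree-zero part is, by the exact sequence (\ref{longex}), the cokernel of $\rhc_0(A)\to\rhc_0(A\ideal{V})$, which is exactly $A\ideal{V}/([A\ideal{V},A\ideal{V}]+A)$; the surviving differential is $\bar d$ and one concludes as before.

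I expect the one point that needs more than the quoted vanishing to be the claim that this relative reduced cyclic homology of the graded algebras vanishes in positive degrees, so that the second spectral sequence really occupies a single line. This is the exact analogue for the free extension $A\hookrightarrow A*T(V)$ of the statement that a free algebra is hereditary: the two-sided bar resolution of $A*T(V)$ relative to $A$ has length one, with the generators $V$ as the only relations, so the relative Hochschild homology lives in degrees $\le 1$, and then the relative form of the exact sequence $0\to\rhc_{n-1}\to\hho_n\to\rhc_n\to0$, valid here since the relevant $d_2$ again vanishes, pushes the relative $\rhc$ into degrees $\le 0$. Alternatively, choosing a free model $T(W)\to A$ produces a model $(T(W\oplus V),D)$ of $A\ideal{V}$ and reduces the relative statement to the first formula applied to $A$ and to $A\ideal{V}$, together with the free-product splitting $\rhc_n(A*T(V))\cong\rhc_n(A)\oplus\rhc_n(T(V))$ for $n\ge1$, where the second summand vanishes by the quoted fact. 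Beyond this relative vanishing the only remaining work is the Koszul-sign bookkeeping in setting up the bicomplex and the identification of its $E_1$-pages; the degenerations themselves are forced, since in each case only one line survives.
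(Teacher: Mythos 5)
Your argument is correct and is precisely the spectral--sequence proof that the paper only sketches in one sentence (``may be proven by a spectral sequence argument, using the fact that $\rhc_n(T(V))=0$ for $n>0$''): the bicomplex with differentials $\bar b$ and $d$ on $\cc_t$, the two degenerating filtrations, and the exactness of $\coker(1-t)$ in characteristic zero are exactly the intended ingredients. Your use of Proposition \ref{fei} for the relative vanishing is legitimate and non-circular provided you note, as you implicitly do, that the first formula is established first and that the proof of Proposition \ref{fei} for $A*T(V)$ with $T(V)$ undifferentiated needs only that first formula.
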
 

Using free models we may get another proof of the duality between $\rhc(A)$ and  
$\rhc(A^!)$ when $A$ is Koszul. Indeed, we have that the cobar construction for $A$, $(T(I^*),d)$, where $d$ is the derivation which extends the dual of the multiplication map $I\otimes I\to I$, is a (minimal) model for $A^!$ when $A$ is Koszul. From this the relation 
(\ref{kosrel}) may be deduced.

The following result by Feigin and Tsygan, see \cite{feigin}, is fundamental for us.
\begin{prop}\label{fei} Let  $A$ och  $B$ be
graded $k$-algebras. Then
\begin{align*}\rhc_n(A*B)=&\rhc_n(A)\oplus\rhc_n(B)\quad\hbox{for}\quad
n>0\\
\rhc_0(A*B)=&\rhc_0(A)\oplus\rhc_0(B)\oplus\rhc_0(T(A^+\otimes B^+))
\end{align*} 
\end{prop}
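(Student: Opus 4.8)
\medskip
\noindent\textbf{Proof plan.} I would deduce this from free models and the description of $\rhc$ in Proposition~\ref{model}. Fix free models $(T(W),d)\xrightarrow{\sim}A$ and $(T(U),d)\xrightarrow{\sim}B$. The first point is that $(T(W\oplus U),d)\xrightarrow{\sim}A*B$, where $d$ restricts to the given differentials on $W$ and $U$ and $T(W)*T(U)=T(W\oplus U)$: as a graded vector space $T(W)*T(U)$ is the direct sum, over all alternating words, of the tensor products $T(W)^+\otimes T(U)^+\otimes\cdots$, the differential acts on each tensor factor separately (since $dW\subseteq T(W)$, $dU\subseteq T(U)$, and $d$ never produces a constant for weight reasons), so by the K\"unneth theorem the homology of each such summand is the corresponding alternating tensor product of copies of $A^+$ and $B^+$, and summing over all alternating words gives $A*B$, compatibly with multiplication. (This is essentially the starting observation of Feigin and Tsygan.)

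By Proposition~\ref{model}, $\rhc(A*B)=\ho(\mathcal N,\bar d)$, where $\mathcal N=T(W\oplus U)/([T(W\oplus U),T(W\oplus U)]+k)$ is the complex of necklaces (cyclic words of length $\ge1$) in the letters $W\sqcup U$. Because $\bar d$ replaces a letter of $W$ by an element of $T(W)^+$ and a letter of $U$ by an element of $T(U)^+$, it cannot turn a necklace that uses a $W$-letter into one that does not, nor the analogue for $U$; hence $\mathcal N$ splits as a direct sum of complexes $\mathcal N(W)\oplus\mathcal N(U)\oplus\mathcal M$, where $\mathcal N(W)$ (resp.\ $\mathcal N(U)$) is the span of necklaces using only letters of $W$ (resp.\ of $U$) and $\mathcal M$ that of the mixed ones. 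Proposition~\ref{model} applied to $T(W)\to A$ and to $T(U)\to B$ identifies $\ho(\mathcal N(W))=\rhc(A)$ and $\ho(\mathcal N(U))=\rhc(B)$, so everything reduces to computing $\ho(\mathcal M)$.

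For that I would group the letters of a mixed necklace into maximal $W$-blocks and maximal $U$-blocks; these alternate around the necklace, so there are $m$ of each for some $m\ge1$, and reading off the blocks starting from a chosen $W$-block gives an isomorphism of complexes
$$
\mathcal M\ \cong\ \bigoplus_{m\ge1}\big((T(W)^+\otimes T(U)^+)^{\otimes m}\big)_{C_m},
$$
where $C_m$ cyclically permutes the $m$ block-pairs (with the Koszul signs) and the differential is the one induced from the tensor-product differential on $T(W)^+\otimes T(U)^+$. Now $T(W)^+\xrightarrow{\sim}A^+$ and $T(U)^+\xrightarrow{\sim}B^+$, so iterated K\"unneth gives $(T(W)^+\otimes T(U)^+)^{\otimes m}\xrightarrow{\sim}(A^+\otimes B^+)^{\otimes m}$; since $\ch k=0$ the coinvariants functor $(-)_{C_m}$ is exact and preserves this quasi-isomorphism. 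As $(A^+\otimes B^+)^{\otimes m}$ lies in homological degree $0$ with zero differential, $\ho(\mathcal M)=\bigoplus_{m\ge1}\big((A^+\otimes B^+)^{\otimes m}\big)_{C_m}$, concentrated in homological degree $0$; applying Proposition~\ref{model} to the tensor algebra $T(A^+\otimes B^+)$ (its own model), this is $\rhc_0(T(A^+\otimes B^+))$, while $\rhc_n(T(A^+\otimes B^+))=0$ for $n>0$. Assembling the three summands of $\mathcal N$ gives the stated formulas.

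I expect the main obstacle to be the sign and grading bookkeeping in the third step: checking that the block decomposition $\mathcal M\cong\bigoplus_m((T(W)^+\otimes T(U)^+)^{\otimes m})_{C_m}$ is genuinely an isomorphism \emph{of complexes} — that the cyclic-rotation signs on necklaces match the Koszul signs of the $C_m$-action, and that $\bar d$ really acts block-wise (never merging or emptying a block) — together with the routine but not entirely automatic fact from the first step that the free product of free models is a free model of the free product. Neither is deep, but both require care with the $\nat\times\mathbb Z_2$-grading.
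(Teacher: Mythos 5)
Your proposal is correct and follows essentially the same route as the paper: form the free product of free models, split the cyclic quotient into the pure $W$-part, the pure $U$-part, and the mixed necklaces decomposed into alternating blocks with a cyclic-group coinvariants, then conclude by K\"unneth and exactness of the coinvariants functor. The only cosmetic difference is that you justify exactness of $(-)_{C_m}$ by averaging in characteristic zero, whereas the paper notes that $\coker(1-t)$ is right exact and isomorphic via the norm map to the left exact $\ker(1-t)$.
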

\begin{proof} Let  $R_A$ and  $R_B$ be models for 
$A$ and  $B$. Then  $R_A*R_B$ is a 
model for  $A*B$. This follows from  K\"unneth's formula and since
$R_A*R_B$ is  free as an algebra. We have
$$
(R_A*R_B)^+=R_A^+\ \oplus\ R_B^+\ \oplus\ (R_A^+\otimes R_B^+)
\ \oplus\ (R_B^+\otimes R_A^+)
\ \oplus\ (R_A^+\otimes R_B^+\otimes R_A^+)\ \oplus\ \ldots
$$     
Modulo commutators we get
$$
(R_A*R_B)^+/[,]=R_A^+/[,]\ \oplus\ R_B^+/[,]\ \oplus\ 
(R_A^+\otimes R_B^+)
\ \oplus\ ((R_A^+\otimes R_B^+)\otimes (R_A^+\otimes R_B^+))_{t_2}
\ \oplus\ \ldots
$$
where  $t_2$ indicates that the two blocks
$(R_A^+\otimes R_B^+)$ may change place. The claim now follows taking homology
and using K\"unneth's formula and that the  functor 
$\coker(1-t):(\cdot)^{\otimes n}\to(\cdot)^{\otimes n})$ is exact, since it is right exact and isomorphic to the left exact functor $\ker(1-t):(\cdot)^{\otimes n}\to(\cdot)^{\otimes n})$. 

\end{proof}
\section{Series and Logarithms}
In this section we will get a formula for the series of
$$
\rhc_0(T(V))=T(V)^+/[T(V),T(V)]
$$ 
for a $\mathbb Z^+\times \mathbb Z_2$-graded locally finite dimensional vector space $V$. On the way we also get a formula for the series of a Lie algebra given the series for its enveloping algebra.

Let $P$ denote the additive group of formal power series 
$$
\sum_{n\ge1}a_nz^n+y\sum_{n\ge1}b_nz^n\in \mathbb Z[[z]][y]/(y^2-1)
$$
and let $U$ denote the multiplicative group $1+P.$ The inverse of
$1-\sum_{n\ge1}a_nz^n-y\sum_{n\ge1}b_nz^n$ is
$$
1+(\sum_{n\ge1}a_nz^n+y\sum_{n\ge1}b_nz^n)+(\sum_{n\ge1}a_nz^n+y\sum_{n\ge1}b_nz^n)^2+\ldots
$$
(observe that any series in $P$ may be inserted in any formal power
series in one variable to get a new well-defined series).

A topology on $P$ and $U$ is defined as follows. We say that
$f_n\to f$ if, for any $r,$ the coefficients in $f_n$ and $f$ are the
same up to $z^r$ and $yz^r$ for big enough $n$. 

Instead of assuming that the coefficients in the formal power series
are integers, we will also consider the case when the coefficients are
rational numbers. In this case we write $P^{\mathbb Q}$ and $U^{\mathbb
Q}$ for the corresponding additive and multiplicative groups.

\begin{dfn} 
A continuous group isomorphism $P\to U$ (or
$P^{\mathbb Q}\to U^{\mathbb Q}$) is
called an ``exponential'' and a continuous isomorphism $U\to P$ (or
$U^{\mathbb Q}\to P^{\mathbb Q}$) is
called a ``logarithm''.
\end{dfn}

\vspace{6pt}
\begin{prop}
\begin{align*}
&\exp(X)=\sum_{k\ge0}\frac{X^k}{k!},\quad X\in P^{\mathbb Q},
\quad\text{is an exponential }P^{\mathbb Q}\to U^{\mathbb Q}\\ 
&\log(X)=-\sum_{k\ge1}\frac{(1-X)^k}{k},\quad X\in U^{\mathbb Q},
\quad\text{is a logarithm }U^{\mathbb Q}\to P^{\mathbb Q}\\
&S(X)=\prod_{k=1}^\infty\frac{(1+yz^k)^{b_k}}{(1-z^k)^{a_k}},\
X=\sum_{k=1}^\infty a_kz^k+y\sum_{k=1}^\infty b_kz^k
\ \text{ is an exponential }P\to U
\end{align*} 
\end{prop}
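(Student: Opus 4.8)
The plan is to verify, for each of the three maps, that it is a well-defined continuous group homomorphism and then that it is bijective with continuous inverse; in all three cases the homomorphism and continuity properties fall out of the way the topology on $P$ and $U$ was defined, so the real work is bijectivity. For $\exp$ and $\log$ this will come for free from the classical one-variable identities $\exp(\log(1+t))=1+t$ and $\log(\exp t)=t$, while for $S$ I will solve the equation $S(X)=f$ recursively, one weight at a time.

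First I would treat $\exp$. Since every $X\in P^{\mathbb Q}$ has no constant term, $X^k$ has lowest degree $\ge k$, so $\sum_{k\ge1}X^k/k!$ converges and $\exp(X)=1+\sum_{k\ge1}X^k/k!\in U^{\mathbb Q}$; continuity is immediate because the coefficients of $\exp(X)$ up to a given weight $r$ involve only the coefficients of $X$ up to weight $r$. Because the coefficient ring $\mathbb Z[[z]][y]/(y^2-1)$ is commutative, $X$ and $Y$ commute, so the binomial theorem together with a legitimate rearrangement (in each fixed weight only finitely many monomials occur) gives $\exp(X+Y)=\exp(X)\exp(Y)$. The same discussion applies to $\log$: for $X\in U^{\mathbb Q}$ the element $1-X$ lies in $P^{\mathbb Q}$, so $\log(X)=-\sum_{k\ge1}(1-X)^k/k$ converges in $P^{\mathbb Q}$ and is continuous. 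Finally, substituting a series with no constant term into a one-variable formal power series over $\mathbb Q$ is a continuous ring homomorphism compatible with substitution, so the classical identities $\exp(\log(1+t))=1+t$ and $\log(\exp t)=t$, valid in $\mathbb Q[[t]]$, specialize to $\exp(\log X)=X$ for $X\in U^{\mathbb Q}$ and $\log(\exp X)=X$ for $X\in P^{\mathbb Q}$. Hence $\exp$ is a continuous group isomorphism with continuous inverse $\log$, which establishes the first two assertions.

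For $S$, write $X=\sum_{k\ge1}a_kz^k+y\sum_{k\ge1}b_kz^k$ with $a_k,b_k\in\mathbb Z$. Each factor $(1+yz^k)^{b_k}(1-z^k)^{-a_k}$ is a genuine element of $U$ (negative exponents being interpreted via inverses in $U$, e.g. $(1+yz^k)^{-1}=(1-yz^k)/(1-z^{2k})$) which equals $1$ plus terms of weight $\ge k$, so the infinite product converges and $S$ is well defined; adding exponents factor by factor and using commutativity of $U$ gives $S(X+X')=S(X)S(X')$, and continuity is clear since factors of weight $>r$ do not affect coefficients up to weight $r$. It then remains to show that $S$ is bijective with continuous inverse, for which I would solve $S(X)=f$ by induction on the weight $k$: expanding the $j=k$ factor as $1+(a_k+b_ky)z^k+(\text{terms of weight}\ge2k)$, and noting that factors with $j>k$ contribute nothing in weights $\le k$, one finds that the weight-$k$ coefficient of $S(X)$ equals $a_k+b_ky$ plus a quantity depending only on $a_1,b_1,\dots,a_{k-1},b_{k-1}$. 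Hence, given $f\in U$, the numbers $a_k,b_k$ are uniquely and recursively determined, and they are integers because all the data involved have integer coefficients. This simultaneously yields injectivity, surjectivity, and continuity of the inverse (since $a_k,b_k$ depend only on the coefficients of $f$ up to weight $k$), so $S$ is an exponential $P\to U$.

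I expect the only real obstacle to be keeping the weight-by-weight bookkeeping for $S$ honest --- in particular confirming that the weight-$k$ coefficient of the whole product really has the form ``$a_k+b_ky$ plus lower-order data'', so that the recursion closes and integrality is preserved; everything else is routine formal-power-series manipulation.
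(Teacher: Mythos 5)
Your proposal is correct and follows essentially the same route as the paper: the homomorphism property of $\exp$ via the binomial theorem and commutativity, the inverse identities $\exp\circ\log=\mathrm{id}$ and $\log\circ\exp=\mathrm{id}$ reduced to the classical one-variable case (you via formal substitution in $\mathbb Q[[t]]$, the paper via convergent series, an immaterial difference), and bijectivity of $S$ by the same weight-by-weight recursion determining $a_k,b_k\in\mathbb Z$ uniquely from the coefficients of $f$ up to weight $k$.
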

\begin{proof}

The usual proof of $e^{x+y}=e^x\cdot e^y$ using formal power series depends
only on the binomial theorem and that $x$ and $y$ are commuting
symbols. Hence also $\exp(X+Y)=\exp(X)\cdot\exp(Y),$ since $P^{\mathbb Q}$
is commutative. The identities $\exp(\log(X))=X$ and $\log(\exp(X))=X$
hold, since they hold for small enough convergent series with complex coefficients.
Hence $\exp$ and $\log$ are bijective and $\log$ is also a
homomorphism and clearly they are continuous. The series $S(X)$ is the series for the symmetric algebra of a graded and
weighted vector space $X=\oplus_{i\ge1}X_i$. 
(By means of the Taylor expansion of $(1+z)^{\alpha}$ the operator $S$
may be extended to a transformation $P^{\mathbb Q}\to U^{\mathbb Q}.$)

This gives indeed an exponential 
$P\to U$.
It is evident that $S$ is a homomor\-phism and that it is continuous. To
prove bijectivity, suppose $f\in U$ and suppose inductively that there are uniquely
determined \break $a_1,b_1,\ldots,a_{k-1},b_{k-1}\in \mathbb Z$ such that 
$$
\prod_{j=1}^{k-1}\frac{(1+yz^j)^{b_j}}{(1-z^j)^{a_j}}\equiv f(z,y)
\mod z^k
$$
Hence 
$$
\prod_{j=1}^{k-1}\frac{(1-z^j)^{a_j}}{(1+yz^j)^{b_j}}f(z,y)=1+a_kz^k+b_kyz^k+\text{
higher terms}
$$
and hence 
$$
\frac{(1-z^k)^a}{(1+yz^k)^b}\prod_{j=1}^{k-1}\frac{(1-z^j)^{a_j}}{(1+yz^j)^{b_j}}f(z,y)\equiv1
\mod z^{k+1}\Leftrightarrow a=a_k\text{ and }b=b_k
$$
\end{proof}
\begin{dfn}
The inverse of $S$ is a logarithm, $\Lie(X)$, which gives back
the series for the Lie algebra when $X$ is the series of its enveloping algebra.
\end{dfn}

In particular $\Lie(\frac{1}{1-X})$ gives the series for the free
Lie algebra on a graded and weighted vector space with series $X.$

\vspace{6pt}
By means of ``$\log$'' we get a lot of other logarithms:

\begin{prop}

Let $c=\{c_k\}_{k=1}^\infty$ be any sequence of  rational numbers with
$c_1\ne0.$ Then
\begin{align}
&\Log_c(X)=\sum_{k=1}^\infty c_k\log(X(z^k,(-1)^{k+1}y^k))\text{ is a logarithm } U^{\mathbb Q}\to P^{\mathbb Q}\\
&\Log_c(1+yz)=\Log_c(1-z)\big\vert_{z=-yz}\label{yz}
\end{align}

\end{prop}
\begin{proof}
The proof of (\ref{yz}) is direct:
\begin{align*}
\Log_c(1+yz)&=\sum_{k=1}^\infty
c_k\log(1+(-1)^{k+1}y^kz^k)\\
&=\sum_{k=1}^\infty
c_k\log(1-(-yz)^k)=\Log_c(1-z)\big\vert_{z=-yz}
\end{align*}
The homomor\-phism law for $\Log_c$ is also evident. 
The substitution $(-1)^{k+1}y^k$ for $y$ in the formula for $\Log_c$ is needed to get (\ref{yz}) and will be
useful below. However, to get a logarithm we could have used e.g. $y$
instead. To prove that $\Log_c$ is an isomorphism, assume $f\in P^{\mathbb Q}$. 
Since $S$ is an isomorphism, it is enough to prove that there is a unique $X$ such that $\Log_c(S(X))=f$. 
Let 
\begin{align*}
\varphi_0(z)=\Log_c(\frac{1}{1-z})=c_1z+\text{ higher terms}\\
\varphi_1(z,y)=\Log_c(1+yz)=c_1yz+\text{ higher terms}
\end{align*}
We want to prove that there are uniquely determined $a_k,\ b_k,\ k\geq1$ such that
$$
\sum_{k=1}^\infty a_k\varphi_0(z^k)+\sum_{k=1}^\infty b_k\varphi_1(z^k,y)=f
$$
Suppose inductively that $k\geq1$ and there are uniquely
determined \break $a_1,b_1,\ldots,a_{k-1},b_{k-1}\in \mathbb Q$ such that
$$
\sum_{j=1}^{k-1} a_j\varphi_0(z^j)+\sum_{j=1}^{k-1} a_j\varphi_1(z^j,y)-f=g_0z^k+g_1yz^k + \text{ higher terms}
$$
Since $a_k\varphi_0(z^k)+b_k\varphi_1(z^k,y)=a_kc_1z^k+b_kc_1yz^k+\text{ higher terms}$, 
we may choose $a_k=g_0/c_1$ and $b_k=g_1/c_1$ to get 
$$
\sum_{j=1}^{k} a_j\varphi_0(z^j)+\sum_{j=1}^{k} a_j\varphi_1(z^j,y)\equiv f\mod z^{k+1}
$$

\end{proof}
\sudda{The inverse of
$\Log_c$ is given by 
$$
\Exp_{c'}(X)=\prod_{k=1}^\infty (\exp(X(z^k,(-1)^{k+1}y^k)))^{c'}
$$
where
$$
(\sum_{k=1}^\infty c_k'z^k)\cdot(\sum_{k=1}^\infty c_kz^k)=z
$$}

\begin{prop}
$$
\Lie(X)=\sum_{k=1}^\infty\frac{\mu(k)}{k}\log(X(z^k,(-1)^{k+1}y^k))\quad\text{for all}\quad X\in U.
$$
\end{prop}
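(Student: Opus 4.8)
The formula to be proven is the classical Möbius-inversion identity for the "Lie logarithm" $\Lie$, now carried out in the sign-graded setting. The plan is to compare two logarithms on $U$: the operator $\Lie$, defined as the inverse of the symmetric-algebra operator $S$, and the candidate operator $L(X)=\sum_{k\ge1}\frac{\mu(k)}{k}\log(X(z^k,(-1)^{k+1}y^k))$, which is of the form $\Log_c$ from the Example with $c_k=\mu(k)/k$ (note $c_1=1\ne0$), hence already known to be a logarithm $U\to P$ — at least after checking that the Möbius coefficients keep everything integral, which follows because $\Lie$ itself is integer-valued and we will be proving the two agree. Since both $\Lie$ and $L$ are continuous group isomorphisms $U\to P$, it suffices to check that they agree on a topologically generating family, or equivalently that $L\circ S=\mathrm{id}$ on $P$.

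First I would reduce to the multiplicative generators of $U$. The image of $S$ consists of products $\prod_k (1+yz^k)^{b_k}/(1-z^k)^{a_k}$, so by continuity and the homomorphism property of both $\Lie$ and $L$ it is enough to verify $L\bigl(S(X)\bigr)=X$ for $X$ a single homogeneous line: $X=z^n$ (even generator) and $X=yz^n$ (odd generator). For $X=z^n$ one has $S(X)=1/(1-z^n)$, so $L(S(X))=\sum_{k\ge1}\frac{\mu(k)}{k}\log\bigl(1/(1-z^{nk})\bigr)=\sum_{k\ge1}\frac{\mu(k)}{k}\sum_{j\ge1}\frac{z^{nkj}}{j}$. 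Collecting the coefficient of $z^{nm}$ gives $\sum_{kj=m}\frac{\mu(k)}{kj}=\frac{1}{m}\sum_{k\mid m}\mu(k)=\delta_{m,1}$, so $L(S(z^n))=z^n$ as required. For $X=yz^n$ one has $S(X)=1+yz^n$, and the substitution $y\mapsto(-1)^{k+1}y^k$ sends $1+yz^n$ to $1+(-1)^{k+1}y^k z^{nk}$; expanding $\log(1+u)=\sum_{j\ge1}(-1)^{j+1}u^j/j$ and using $y^2=1$ so that $y^k$ is $y$ or $1$ according to the parity of $k$, the same Möbius cancellation on the coefficient of $z^{nm}$ leaves exactly the $m=1$, $j=1$ term, namely $yz^n$. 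Here the sign bookkeeping is the only delicate point: one must check that the chosen substitution $(-1)^{k+1}y^k$ (rather than plain $y$) is precisely what makes the odd part come out with the correct sign so that $L$ inverts $S$ and not merely some twisted version of it.

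The main obstacle is therefore the sign/parity bookkeeping in the odd case — tracking how $y^k$, the factor $(-1)^{k+1}$, and the factor $(-1)^{j+1}$ from the $\log$ expansion interact, and confirming that after the Möbius sum over $k$ the surviving term is $+yz^n$ with no residual sign. This is exactly the reason the paper introduced the substitution $(-1)^{k+1}y^k$ in the Example rather than just $y$: it is the correction that accounts for the Koszul sign rule in the symmetric algebra of a $\mathbb Z_2$-graded space (an odd generator contributes an exterior factor $1+yz^n$, whose "plethystic" Frobenius-type substitution is the signed one). Once this is verified on generators, continuity and the homomorphism property of both sides upgrade the identity to all of $U$, completing the proof; and the resulting agreement with $\Lie$ also retroactively confirms that the coefficients of $L(X)$ are integers for every $X\in U$, so that $L$ genuinely maps $U\to P$ and not merely $U^{\mathbb Q}\to P^{\mathbb Q}$.
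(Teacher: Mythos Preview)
Your proposal is correct and follows essentially the same strategy as the paper: both verify that the candidate $\Log_c$ with $c_k=\mu(k)/k$ inverts $S$ by checking on the multiplicative generators of $U$ (equivalently, on additive generators of $P$) and then invoke the homomorphism and continuity properties. Two minor differences are worth noting. First, the paper uses the additional observation that both $\Lie$ and $\Log_c$ commute with the substitution $z\mapsto z^k$, which lets one reduce from all $z^n$, $yz^n$ to just the single cases $X=1-z$ and $X=1+yz$; you check all $n$ directly, which is fine but slightly redundant. Second, for the odd generator the paper avoids your direct sign expansion by means of the identity $\Log_c(1+yz)=\Log_c(1-z)\big\vert_{z=-yz}$ (coming from $1+(-1)^{k+1}y^kz^k=1-(-yz)^k$), so the odd case is reduced in one line to the even case already computed; this is exactly the ``sign bookkeeping'' you flag as the delicate point, and the substitution trick disposes of it cleanly.
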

\begin{proof}
We will determine $c,$ such that $\Lie=\Log_c.$ In fact, it is
enough to determine $c$ such that $\Lie(X)=\Log_c(X)$ for $X=1-z$ and
$X=1+yz.$ This follows from the fact that $S$ is surjective and both $\Lie$ and $\Log_c$ are
logarithms  which commute with the substitution $z\to z^k.$
Now $\Lie(\frac{1}{1-z})=z$ and $\Lie(1+yz)=yz$ and
\begin{align}\label{log}
-\Log_c(1-z)=-\sum_{k=1}^\infty c_k\log(1-z^k)=\sum_{j,k=1}^\infty
c_k\frac{z^{kj}}{j}=\sum_{n=1}^\infty (\sum_{k|n}kc_k)\frac{z^n}{n}
\end{align}
Hence, $\Lie(1-z)=\Log_c(1-z)$ if $c_1=1$ and $\sum_{k|n}kc_k=0$ for
$n>1.$ M\"obius inversion formula gives that the solution is given by
$kc_k=\mu(k).$ 
With $c$ chosen like this, we may use (\ref{yz}) to get that 
$\Log_c(1+yz)=\Lie(1-z)\big\vert_{z=-yz}=yz=\Lie(1+yz)$ and the result follows.

\end{proof}

From the proof above it follows that any logarithm $L$ is of the form
$\Log_c$ for a unique $c$ if the following conditions hold:
\begin{itemize}
\item $L$ commutes with the substitution $z\to z^k$ for any $k$
\item $L(1+yz)=L(1-z)\big\vert_{z=-yz}$
\end{itemize}

If we apply the formula for $\Lie$ on $1-dz$ we get Serre's formula for
the series of the free Lie algebra on $d$ even generators:
$$
\sum_{n=1}^\infty(\frac{1}{n}\sum_{j|n}\mu(\frac{n}{j})d^j)z^n.
$$ 
We now turn to the series for $\rhc_0(T(V))$. Let $V(z,y)\in P$ be the series for $V$ and let $\hcc(V)$ denote the series for $\rhc_0(T(V))$. Here we use the same name for the series and the corresponding vector space.

By Proposition \ref{fei} we have the following rule for $\hcc$, where we assume that the coefficients in the series are natural numbers 
\begin{align}\label{tsyg}
\hcc(V_1+V_2)=\hcc(V_1)+\hcc(V_2)+\hcc(V_1\cdot V_2/(1-V_1)/(1-V_2)).
\end{align}

\vspace{10pt}
 Now define the transformation $\hcfree:P\to P$ as
$$
\hcfree(V)(z,y)=-\sum_{k=1}^\infty\frac{\phi(k)}{k}\log(1-V(z^k,(-1)^{k+1}y^k)),
$$
where $\phi$ is Euler's $\phi$-function. Observe that this is similar to the formula for $\Lie(1-V)$ above, the difference is only that the M\"obius function has been replaced by 
Euler's $\phi$-function. The transformation $\hcfree$ also satifies  (\ref{tsyg}), which follows from the algebraic identity
$$
1-V_1-V_2=(1-V_1)(1-V_2)(1-V_1\cdot V_2/(1-V_1)/(1-V_2)).
$$
We will now first prove that $\hcc(V)=\hcfree(V)$ when $V$ is one-dimensional. Since both transformations commute with $z\to z^k$, it is enough to consider the two cases $V=z$ and $V=yz$. For a free algebra on one even generator $\rhc_0$ is one-dimensional in each degree $\ge1$, hence $\hcc(z)=z/(1-z)$. Using the computation in (\ref{log}), we see that the series $\hcfree(z)$ is 
$$
\hcfree(z)=\sum_{n=1}^\infty\frac{z^{n}}{n}\sum_{k|n}{\phi(k)}.
$$
Since  $\sum_{k|n}{\phi(k)}=n$ for all $n$, we get that $\hcc(z)=\hcfree(z)$. For a free algebra on one odd generator $\rhc_0$ is zero in each even degree and one-dimensional in each odd degree $\ge1$, hence $\hcc(yz)=yz/(1-z^2)$. Since $(1+yz)(1-yz)=1-z^2$ and $\hcfree(V)$ is logarithmic in $1-V$, we have 
$$
\hcfree(yz)=\hcfree(z^2)-\hcfree(-yz)=\hcfree(z)\big\vert_{z=z^2}-\hcfree(-yz).
$$
Using (\ref{yz}), we get that 
$\hcfree(-yz)=\hcfree(z)\big\vert_{z=-yz}=-yz/(1+yz)$ and hence 
$$
\hcfree(yz)=z^2/(1-z^2)+yz/(1+yz)=yz/(1-z^2)=\hcc(yz).
$$
The proof that $\hcc(V)=\hcfree(V)$ for all $V\in P$ with natural numbers as coefficients is now completed by a double induction, using that both transformations satisfy (\ref{tsyg}). Indeed, fixing  a degree $k$ we claim that the coefficients for $z^k$ and $yz^k$ agree in $\hcc(V)$ and $\hcfree(V)$. For $V=\sum_{r=1}^\infty a_rz^r+b_ryz^r$ define 
$\ord(V)=\min\{r;\ a_r+b_r\ne0\}$ ($\ord(0)=\infty$). We have that 
$\ord(\hcc(V))\ge\ord(V)$ and $\ord(\hcfree(V))\ge\ord(V)$. The claim is true for all $V$ with $\ord(V)>k$. Suppose the claim is true for all $V$ such that 
$\ord(V)>r$ and let $V=a_rz^r+b_ryz^r+W$ where $\ord(W)>r$. We now prove the claim for $V$ by induction over $a_r+b_r$. Suppose the claim is true if $a_r+b_r<N$ and suppose  $a_r+b_r=N\ge1$. Let $V_1=z^r$ if $a_r>0$ and $V_1=yz^r$ otherwise. Then the claim is true for $V_1$ by the above and for $V-V_1$ by induction. Since 
$\ord(V_1(V-V_1))>r$ the claim is also true for $V_1(V-V_1)/(1-V_1)/(1-V+V_1)$ and hence, by (\ref{tsyg}), the claim is true for $V$.

\vskip20pt
\section{Free algebras modulo one relation}
\subsection{Strongly free}

\begin{dfn}
Suppose $A$ is a differential graded $k$-algebra (DGA for short) and $\omega\in A$ is a nonzero homogeneous cycle. A new DGA, $A^\omega$, is defined in the following way. Let $s^\omega A$ denote the graded vector space which is obtained from $A$ by raising all degrees (sign degree, weight and homological degree) with the corresponding degrees for $\omega$. The elements in $s^\omega A$ are written $s^\omega a$ for a homogeneous element $a\in A$ (even $a=1$ is possible) and $s^\omega$ is considered as a linear operator $A\to A^\omega$. Then
\begin{align*}
A^\omega&=k\oplus s^\omega A \quad
\text{as a vector space }\\
d(s^\omega a)&=(-1)^\omega s^\omega d(a)\\
(s^\omega a_1)\cdot(s^\omega a_2)&=s^\omega (a_1\omega a_2).
\end{align*}
\end{dfn}
Now suppose $A$ is just a $k$-algebra without differential. Choose a homogeneous $k$-section $f$ of the natural map $A\to A/A\omega A$ (with $f(1)=1$) and define an algebra homomorphism
\begin{align*}
F:T(s^\omega (A/A\omega A)) \longrightarrow A^\omega \quad\text{induced
by}\quad s^\omega a\mapsto s^\omega f(a).
\end{align*}
We have
$$
F(s^\omega a_1\otimes\cdots\otimes s^\omega a_n)=s^\omega(f(a_1)\omega f(a_2)\omega\ldots\omega f(a_n)).
$$
Since $A$ is generated as an algebra by $\omega$ and the image of $f$ (proof by induction over the weight), we get that $F$ is a surjective homomorphism of algebras. The notion of {\em strongly free} was introduced by D. Anick in \cite{anick}. The slightly different definition given below  is however more suitable for our purposes.  

\begin{dfn}
A homogeneous nonzero element $\omega$ in a $k$-algebra $A$ is called {\em strongly free} if the map $F$ above is an isomorphism. A sequence $\omega=(\omega_1,\omega_2,\ldots)$ of homogeneous elements in $A$ is called a {\em strongly free set} in $A$ if, for all $i$, 
$\omega_i$ is strongly free in $A$ modulo the ideal generated by $\{\omega_1,\ldots,\omega_{i-1}\}$. 
\end{dfn}

\vspace{10pt}
The definition is independent of the choice of the section $f$, since a surjective graded linear map between two isomorphic graded vector spaces is an isomor\-phism.
As always, we use the same notation of a vector space and its series (in two variables). Since $F$ is always surjective, we get the inequality
$$
1/(1-\omega B)\ge A^\omega=1+\omega A
$$
from which the inequality
\begin{align}\label{ineq}
B\ge A/(1+\omega A)
\end{align}
may be deduced. Observe that the inequality of series $C>0$ means that the first nonzero coefficient in $C$ is positive  and that $A\ge B$, $C\ge0$ implies that $AC\ge BC$.

We have that equality holds in (\ref{ineq})
iff $\omega$ is strongly free in $A$. The same formula (and inequality) holds also in the case when $\omega$ is a strongly free set in $A$. Indeed, suppose $\omega=\{\omega_1,\omega_2\}$ is a strongly free set in $A$. Then the series for $A/A\omega A$ is
$$
A/(1+\omega_1 A)/(1+\omega_2A/(1+\omega_1A))=A/(1+\omega_1A+\omega_2A).
$$
It follows from the inequality of series that any reordering of a strongly free set $\omega$ is still a strongly free set and hence the use of the word {\em set} instead of {\em sequence} is appropriate.

In particular, if $\omega$ is any set of homogeneous elements in $T(V)$, then 
$$
T(V)/\ideal{\omega}\ge1/(1-V+\omega)
$$
with equality iff $\omega$ is a strongly free set in $T(V)$.

\vspace{10pt}
\subsection{Free extensions with one variable}
We will now in detail study a free extension with one variable, 
$A\to A\ideal{S;\ dS=\omega}$, 
in particular the case when $A$ is concentrated in homological degree zero and hence $S$ has homological degree one.
The reason for introducing $A^\omega$ in the previous subsection is the following proposition, which also gives another characterization of the property strongly free.

\begin{prop}\label{freeext} Let $i:A\to A\ideal{S;\ dS=w}$ be a free extension of an algebra $A$, concentrated in homological degree zero, with a variable $S$ killing the homogeneous nonzero element $\omega\in A$. Then, for $n,q\ge0$, $\epsilon\in\mathbb Z_2$
\begin{align*}
\ho_n(A\ideal{S})&=\tor_{n+1}^{A^\omega}(k,k)\\
\ho_{n+1,q,\epsilon}((A\ideal{S}/([A\ideal{S},A\ideal{S}]+A),d))&=\rhc_{n,q,\epsilon+1}(A^\omega). 
\end{align*}
\end{prop}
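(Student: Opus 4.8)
The plan is to realise the differential graded algebra $A\langle S\rangle=A\langle S;\,dS=\omega\rangle$ as a homological shift of the reduced bar complex of the algebra $A^\omega$, after which both identities follow. Since $A\langle S\rangle=A*T(kS)$, a basis of the homological-degree-$n$ part $A\langle S\rangle_n$ is given by the alternating words $a_0Sa_1S\cdots Sa_n$ with the $a_i$ running over a basis of $A$ (the value $a_i=1$ allowed, and recording two consecutive $S$'s): distinct reduced words in a free product are linearly independent, and these are exactly the reduced words containing $n$ letters equal to $S$. Hence $A\langle S\rangle_n\cong A^{\otimes(n+1)}\cong(s^\omega A)^{\otimes(n+1)}$ via $a_0Sa_1\cdots Sa_n\mapsto s^\omega a_0\otimes\cdots\otimes s^\omega a_n$, and $(s^\omega A)^{\otimes(n+1)}=((A^\omega)^+)^{\otimes(n+1)}$ is the degree-$(n+1)$ term of the reduced bar complex of $A^\omega$. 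As $A$ carries the zero differential and $dS=\omega$, the Leibniz rule gives $d(a_0Sa_1\cdots Sa_n)=\sum_{i=1}^{n}\pm a_0S\cdots a_{i-1}\omega a_i\cdots Sa_n$, the $i$-th term merging the blocks $a_{i-1},a_i$ into $a_{i-1}\omega a_i=(s^\omega a_{i-1})(s^\omega a_i)$; so $d$ goes over to the bar differential, with only the $n$ inner faces occurring (the two extreme faces drop out because the augmentation is zero on $(A^\omega)^+$, exactly as when computing $\tor^{A^\omega}(k,k)$). Taking homology, $\ho_n(A\langle S\rangle)$ equals the $(n+1)$-st homology of the bar complex, that is $\tor_{n+1}^{A^\omega}(k,k)$; the chosen identification is what carries the grading, recording the shifts of weight and sign degree built into $s^\omega$ and into $\deg S$.

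For the second identity I would push this picture through $M\mapsto M/([M,M]+A)$ (note that $A\langle S\rangle$ is in general \emph{not} a model for an algebra — its homology need not be concentrated in degree zero, which is precisely what the first formula measures — so one cannot simply quote Proposition \ref{model}). Modulo $[A\langle S\rangle,A\langle S\rangle]$ a monomial becomes a cyclic word (up to Koszul sign), and modulo $A$ one discards the $S$-free part, i.e. the homological-degree-zero part; in homological degree $m$ a cyclic word, after a rotation bringing an $S$ to the front and absorbing the block after the last $S$ into the block before the first $S$, takes the form $Sb_1Sb_2\cdots Sb_m$ with $b_j\in A$, two such agreeing iff they differ by a cyclic rotation of the units $(S,b_j)$. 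Thus $\bigl(A\langle S\rangle/([A\langle S\rangle,A\langle S\rangle]+A)\bigr)_m\cong(s^\omega A)^{\otimes m}/(1-t)=\coker(1-t)$, the complex $\cc_t(A^\omega)$ in position $m$; and, just as before, the induced differential merges cyclically adjacent blocks via $\omega$, hence corresponds to the induced cyclic differential $\bar b$. Therefore $\ho_m\bigl(A\langle S\rangle/([A\langle S\rangle,A\langle S\rangle]+A)\bigr)=\ho_m(\cc_t(A^\omega))=\rhc_{m-1}(A^\omega)$, which for $m=n+1$ is the stated formula. (A check at $m=1$: there $\cc_t(A^\omega)_1=(A^\omega)^+$ with zero differential to position $0$, while the image of $\bar b$ from $\cc_t(A^\omega)_2$ is $[(A^\omega)^+,(A^\omega)^+]$, so the homology is $(A^\omega)^+/[(A^\omega)^+,(A^\omega)^+]=\rhc_0(A^\omega)$, as required.)

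What remains is bookkeeping of the three gradings, and this is where the care lies. The homological ``$+1$'' is, in the first formula, the fact that the bar complex has one more tensor factor than $A\langle S\rangle$ has copies of $S$, and in the second formula the degree shift already built into $\rhc_n=\ho_{n+1}(\cc_t)$; the weight is preserved since $\deg_{\mathrm{wt}}S=\deg_{\mathrm{wt}}\omega$ coincides with the weight shift in $s^\omega$; and the sign degree picks up a shift by one, to be read off from $\deg_{\mathrm{sgn}}S=\deg_{\mathrm{sgn}}\omega+1$ (the differential changes the sign degree) together with the sign-degree component of the shift in the definition of $\rhc$. The main obstacle is precisely this sign discipline: one must check that the Koszul signs coming from the Leibniz rule for $d$ and from cyclically permuting $S$'s modulo commutators match the signs in the bar differential, in the induced cyclic differential $\bar b$, and in the cyclic operator $t$, so that the linear isomorphisms above are honest isomorphisms of complexes carrying the gradings as claimed. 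Identifying the underlying vector spaces and the shapes of the differentials, by contrast, is routine.
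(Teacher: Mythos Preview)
Your approach is essentially the same as the paper's: both identify $A\langle S\rangle$ with the (shifted) reduced bar complex $\bb(A^\omega)$ via $a_0Sa_1\cdots Sa_n\mapsto s^\omega a_0\otimes\cdots\otimes s^\omega a_n$, and the cyclic quotient with $\cc_t(A^\omega)$ via the analogous map on words ending in $S$. The only refinement the paper adds is the explicit global sign $(-1)^{n|\omega|}$ in front of both maps, which is exactly the ``sign discipline'' you flagged as the remaining check; with that sign inserted the maps are chain isomorphisms on the nose.
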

\begin{proof} 
The elements in  $A\ideal{S}$ can uniquely be
represented as  $a_0Sa_1S\ldots Sa_n$, where
$a_0,\ldots,a_n\in A$. The elements in
$(A\ideal{S}/[,]+A,d)$ can uniquely be represented as $a_0Sa_1S\ldots a_nS$, where 
$a_0,\ldots,a_n\in A$. Consider the maps
 $\phi: A\ideal{S}\to\bb(A^\omega)$ of homological degree $+1$ (and weight equal to the weight of $\omega$ and sign degree $|\omega|+1$) 
 and   
$\phi_t:(A\ideal{S}/[,]+A,d)\to\cc_t(A^\omega)$ of homological degree $-1$, weight zero and sign degree one defined by 
\begin{align*}
\phi(a_0Sa_1S\ldots Sa_n)&=(-1)^{n|w|} (s^\omega
a_0|\ldots|s^\omega a_n)\\
\phi_t(a_0Sa_1S\ldots a_nS)&=(-1)^{n|w|} (s^\omega
a_0\otimes\ldots\otimes s^\omega a_n)/(1-t). 
\end{align*}
It is easy to verify that $\phi$ och  $\phi_t$ are  isomorphisms
for complexes.  

\end{proof}

\begin{rmk} The proposition is true also in the case when $A$ is a DGA (the sign for $\phi$ and $\phi_t$ in the proof is changed to $(-1)^{p|w|}$ where $p$ is the homological degree of the argument on the left hand side). 
Then the fact $\ho(A^\omega)=\ho(A)^\omega$
may be used to prove that a set 
$\{\omega_i \}$ is strongly free in A iff $A\ideal{S_i;\ dS_i=\omega_i}$ is acyclic. This could be compared with the result in commutative algebra that a sequence is regular iff the Koszul complex is acyclic.  
\end{rmk}

Since a $k$-algebra $B$ is free iff $\tor^B_n(k,k)=0$ for $n>1$, it follows from Proposition \ref{freeext} that $A\ideal{S;\ dS=w}$ is acyclic iff $\omega$ is strongly free in $A$ and in this case $A\ideal{S;\ dS=w}$ is a relative model for $A\to A/A\omega A$. 
\begin{prop}\label{strong}
Suppose $A$ is a graded algebra and 
$\omega$ is strongly free in   $A$. Put $B=A/A\omega A$ and let $s$ denote shift of sign degree. Then
\begin {align*}
a)&\quad\rhc_n(B)\simeq\rhc_n(A)\quad\text{for}\quad
n\geq2\\
b)&\quad\text{there is an exact sequence}\\
&0\rightarrow s\rhc_1(A)\longrightarrow s\rhc_1(B)
\longrightarrow\rhc_0(A^\omega)\longrightarrow\rhc_0(A)
\longrightarrow\rhc_0(B)\rightarrow0\\
c)&\quad\text{if}\quad\rhc_1(A)=0\quad\text{then}\quad s\rhc_1(B)\cong (\omega A\cap[A,A])/[\omega A,\omega A] 
\end{align*}
\end{prop}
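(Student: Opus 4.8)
The plan is to push everything through the relative model that strong freeness of $\omega$ provides and then to read off the three statements from the long exact sequence (\ref{longex}). The two facts driving the argument are: (i) since $\omega$ is strongly free, $A\ideal{S;\ dS=\omega}$ is acyclic, hence a relative model for $A\to B=A/A\omega A$, so Proposition \ref{model} applies to it; and (ii) again by the definition of strong freeness, $A^\omega\cong T(s^\omega B)$ is a \emph{free} algebra, so $\rhc_n(A^\omega)=0$ for $n>0$, while $\rhc_0(A^\omega)=(A^\omega)^+/[A^\omega,A^\omega]=s^\omega A/[s^\omega A,s^\omega A]$. Combining Proposition \ref{model} with the second isomorphism of Proposition \ref{freeext} gives
$$\rhc_n(A\to B)\cong s\rhc_{n-1}(A^\omega)\quad(n\ge1),\qquad \rhc_0(A\to B)=0,$$
the sign shift coming from the $\epsilon\mapsto\epsilon+1$ in Proposition \ref{freeext} and the $n=0$ case holding because $A\ideal{S}/([A\ideal{S},A\ideal{S}]+A)$ has nothing in homological degree $0$. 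By (ii) this collapses further: $\rhc_n(A\to B)=0$ for $n\ge2$ and $\rhc_1(A\to B)=s\rhc_0(A^\omega)$.

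Part a) is then immediate from (\ref{longex}): for $n\ge2$ both neighbouring relative terms $\rhc_{n+1}(A\to B)$ and $\rhc_n(A\to B)$ vanish, so the segment $0\to\rhc_n(A)\to\rhc_n(B)\to0$ forces the natural map $\rhc_n(A)\to\rhc_n(B)$ to be an isomorphism. For b), the relevant portion of (\ref{longex}), after using $\rhc_2(A\to B)=\rhc_0(A\to B)=\rhc_{-1}(A)=0$, substituting $\rhc_1(A\to B)=s\rhc_0(A^\omega)$, and reconciling the sign-degree shift with $s^2=\mathrm{id}$, becomes exactly
$$0\to s\rhc_1(A)\to s\rhc_1(B)\to\rhc_0(A^\omega)\to\rhc_0(A)\to\rhc_0(B)\to0.$$

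To make b) and c) statements about a concrete map, I would identify the arrow $\rhc_0(A^\omega)\to\rhc_0(A)$ occurring here. Tracing it through the isomorphism $\phi_t$ of Proposition \ref{freeext} together with the connecting map of the mapping cone defining $\rhc(A\to B)$: a class of $\rhc_1(A\to B)=\ho_1(A\ideal{S}/([A\ideal{S},A\ideal{S}]+A))$ is represented by $aS$ with $a\in A$, $\phi_t$ matches it with $\overline{s^\omega a}\in s^\omega A/[s^\omega A,s^\omega A]$, and the connecting map sends $\overline{aS}$ to the class of $d(aS)=\pm a\omega$ in $\rhc_0(A)=A^+/[A,A]$; since $a\omega\equiv\pm\omega a\bmod[A,A]$ the map is $\overline{s^\omega a}\mapsto\overline{\omega a}$ up to a sign absorbed in the $s$-shift. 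This sign bookkeeping — in particular verifying that $s^\omega a\mapsto\omega a$ sends $[s^\omega a_1,s^\omega a_2]$ to $[\omega a_1,\omega a_2]$ with matching signs — is the main technical obstacle.

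For c), when $\rhc_1(A)=0$ the sequence in b) gives $s\rhc_1(B)\cong\ker(\rhc_0(A^\omega)\to\rhc_0(A))$. Now $s^\omega1$ is one of the free generators of $A^\omega=T(s^\omega B)$ (it equals $s^\omega f(\bar1)$, and $\bar1$ spans the weight-zero line of $B$), so left multiplication by $s^\omega1$ on $(A^\omega)^+=s^\omega A$ is injective; transporting along the linear isomorphism $s^\omega\colon A\to s^\omega A$, this says that $a\mapsto\omega a$ is injective, i.e. $\omega$ is a non-zero-divisor. Hence $s^\omega a\mapsto\omega a$ is a linear isomorphism $s^\omega A\xrightarrow{\sim}\omega A$ carrying $[s^\omega A,s^\omega A]$ onto $[\omega A,\omega A]$, so $\rhc_0(A^\omega)\cong\omega A/[\omega A,\omega A]$ and the map into $\rhc_0(A)=A^+/[A,A]$ is the one induced by the inclusion $\omega A\subseteq A^+$. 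Its kernel is $\{\overline{\omega a}:\omega a\in[A,A]\}=(\omega A\cap[A,A])/[\omega A,\omega A]$ — using $[\omega A,\omega A]\subseteq\omega A\cap[A,A]$, which is immediate since $[\omega a_1,\omega a_2]=\omega\bigl(a_1\omega a_2-(-1)^{|\omega a_1|\,|\omega a_2|}a_2\omega a_1\bigr)$ — and this is exactly c).
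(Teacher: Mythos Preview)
Your proof is correct and follows the same line as the paper's: identify $\rhc_n(A\to B)\cong s\rhc_{n-1}(A^\omega)$ via Propositions \ref{model} and \ref{freeext}, feed this and the freeness of $A^\omega$ into the long exact sequence (\ref{longex}) to get a) and b), and for c) use that $\omega$ is a non-zerodivisor to identify $\rhc_0(A^\omega)\cong\omega A/[\omega A,\omega A]$ and compute the kernel of the map to $A^+/[A,A]$. The only cosmetic difference is that the paper packages the last step as ``comparing'' the sequence in b) with the elementary four-term sequence $0\to(\omega A\cap[A,A])/[\omega A,\omega A]\to\omega A/[\omega A,\omega A]\to A^+/[A,A]\to A^+/(\omega A+[A,A])\to0$, whereas you compute the kernel directly; your explicit tracing of the connecting map through $\phi_t$ is in fact a point the paper leaves implicit.
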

\begin{proof} 
By the argument above and by Proposition \ref{model} and Proposition \ref{freeext}, $\rhc_n(A\to B)\cong s\rhc_{n-1}(A^\omega)$ and then
the long exact sequence for cyclic homology of a map (\ref{longex}) gives
\begin{align*}
&\ldots\to\rhc_n(A)\to\rhc_n(B)\to s\rhc_{n-1}(A^\omega)
\to s\rhc_{n-1}(A)\to\\
&s\rhc_{n-1}(B)\to\rhc_{n-2}(A^\omega)\to \rhc_{n-2}(A)\to\rhc_{n-2}(B)\to\ldots\,.
\end{align*}

Since $A^\omega$ is free, a) and b) follows. 
To prove c), we use the following exact sequence which holds for any homogeneous element $\omega\in A$ and is proved quite elementary.
\begin{align*}
0\rightarrow&(\omega A\cap[A,A])/[\omega A,\omega A]
\rightarrow \omega A/[\omega A,\omega A] \rightarrow\\
&A^+/[A,A]\rightarrow A^+/(\omega A+[A,A]) \rightarrow 0  
\end{align*}
Since $\omega$ is strongly free in $A$, it follows that $\omega$ is a non-zerodivisor and hence the map $s^\omega A\to \omega A$ given by $s^\omega a\mapsto \omega a$ is an isomorphism of vector spaces. Under this map $[A^\omega,A^\omega]$ is mapped to $[\omega A,\omega A]$, since $(s^\omega a)(s^\omega b)=s^\omega(a\omega b)$ is mapped to $\omega a\omega b$. 

Hence, in the sequence above,  
$\omega A/[\omega A,\omega A]$ may be replaced by \break $s^\omega A/[A^\omega,A^\omega]=\rhc_0(A^\omega)$. Comparing the above sequence and the sequence in b) and using  
$\rhc_1(A)=0$ we get 
$$
\rhc_1(B)\cong s((\omega A\cap[A,A])/[\omega A,\omega A]).
$$

\end{proof}
\subsection{Strongly free sets in free algebras}
\begin{prop}\label{freeset}
Suppose $\omega$ is a strongly free set in  $T(V)$ and put $B=T(V)/(\omega)$. Then
\begin{align*}
a)&\quad\rhc_n(B)=0\quad\text{for}\quad n\geq2\\
b)&\quad\rhc_0(B)=y\rhc_1(B)+
\hcfree(V-\omega)\\
c)& \quad\text{if }\omega\text{ consists of monomials, then }\rhc_1(B)=0
\end{align*}
\end{prop}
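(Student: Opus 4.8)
The plan is to handle all three parts via the filtration of $B$ by the relations. Enumerate $\omega=\{\omega_1,\omega_2,\dots\}$, put $A_0=T(V)$ and $A_j=A_{j-1}/A_{j-1}\omega_jA_{j-1}=T(V)/\ideal{\omega_1,\dots,\omega_j}$, so that $\omega_j$ is strongly free in $A_{j-1}$ and $B$ is the colimit of $A_1\twoheadrightarrow A_2\twoheadrightarrow\cdots$. Since $V$, hence $T(V)$, is locally finite, each weight contains only finitely many $\omega_i$, so in each weight the colimit stabilizes and $\rhc_n(B)$ agrees weightwise with $\rhc_n(A_j)$ for $j\gg0$. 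Below $\omega_i$ also denotes the series $z^{\wei\omega_i}y^{|\omega_i|}$ and $\omega_{\le j}=\sum_{i\le j}\omega_i$; by strong freeness $A_j=1/(1-V+\omega_{\le j})$ as series. Part a) is then immediate: Proposition~\ref{strong}a) gives $\rhc_n(A_j)\cong\rhc_n(A_{j-1})$ for $n\ge2$, hence $\rhc_n(A_j)\cong\rhc_n(T(V))=0$, and passing to the colimit $\rhc_n(B)=0$ for $n\ge2$.

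For part b) I would first record that $\hcfree$ is additive over factorizations: if $1-U=\prod_\alpha(1-g_\alpha)$ with each $g_\alpha$ of zero constant term, then $\hcfree(U)=\sum_\alpha\hcfree(g_\alpha)$ (immediate from the definition, since $\log$ sends products to sums), so the $\hcfree$ of a reciprocal factor is minus that of the factor. Taking the Euler characteristic, bidegree by bidegree in weight and sign, of the five–term exact sequence of Proposition~\ref{strong}b) applied with $A=A_{j-1}$, $\omega=\omega_j$, and writing $\Phi(X)=\rhc_0(X)-y\rhc_1(X)$ (recall $s$ multiplies a series by $y$), one obtains $\Phi(A_j)=\Phi(A_{j-1})-\rhc_0((A_{j-1})^{\omega_j})$. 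Since $\omega_j$ is strongly free in $A_{j-1}$, the algebra $(A_{j-1})^{\omega_j}$ is free on generators concentrated in homological degree $0$ with series $\omega_jA_j=\omega_j/(1-V+\omega_{\le j})$, so by the identity $\hcc=\hcfree$ of Section~2, $\rhc_0((A_{j-1})^{\omega_j})=\hcfree\bigl(\omega_j/(1-V+\omega_{\le j})\bigr)$. As $1-\omega_j/(1-V+\omega_{\le j})=(1-V+\omega_{\le j-1})/(1-V+\omega_{\le j})$ is the reciprocal of the $j$-th factor in
$$1-V+\omega_{\le n}=(1-V)\prod_{j=1}^{n}\Bigl(1+\frac{\omega_j}{1-V+\omega_{\le j-1}}\Bigr),$$
the factorization property gives $\rhc_0((A_{j-1})^{\omega_j})=-\hcfree\bigl(-\omega_j/(1-V+\omega_{\le j-1})\bigr)$ and therefore $\sum_{j=1}^{n}\rhc_0((A_{j-1})^{\omega_j})=\hcfree(V)-\hcfree(V-\omega_{\le n})$. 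Since $\Phi(T(V))=\rhc_0(T(V))=\hcfree(V)$ (as $\rhc_1(T(V))=0$), telescoping the recursion yields $\Phi(A_n)=\hcfree(V-\omega_{\le n})$, and $n\to\infty$ weightwise gives $\Phi(B)=\hcfree(V-\omega)$, which is exactly b).

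For part c) the induction hypothesis is $\rhc_1(A_{j-1})=0$, true for $j=1$; when the $\omega_i$ are monomials each $A_{j-1}$ is a monomial algebra, and $\omega_j$, being a nonzero monomial, contains no earlier relation as a factor, hence is a basis monomial of $A_{j-1}$. Feeding $\rhc_1(A_{j-1})=0$ into the five–term sequence identifies $s\rhc_1(A_j)$ with the kernel of the map $\rhc_0((A_{j-1})^{\omega_j})\to\rhc_0(A_{j-1})$ appearing there; via the isomorphism $s^{\omega_j}A_{j-1}\cong\omega_jA_{j-1}$ from the proof of Proposition~\ref{strong}c), this map sends the class of a monomial $g_{\bar a_1}\cdots g_{\bar a_n}$ of the free algebra $(A_{j-1})^{\omega_j}$ (the $\bar a_i$ basis monomials of $A_{j-1}$ avoiding $\omega_j$) to the cyclic class of the word $\omega_ja_1\omega_ja_2\cdots\omega_ja_n$ in $A_{j-1}$. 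The point I would have to establish is that this map is injective: using that a strongly free monomial is unbordered and — together with strong freeness — can be inserted between basis monomials without creating a relation, the words $\omega_ja_1\cdots\omega_ja_n$ are basis monomials of $A_{j-1}$ with no zero cyclic rotation, hence nonzero in $\rhc_0(A_{j-1})$, and reading off the occurrences of $\omega_j$ recovers $(a_1,\dots,a_n)$ up to cyclic rotation, so distinct cyclic words go to distinct cyclic classes. Then the kernel vanishes, $\rhc_1(A_j)=0$, and $\rhc_1(B)=0$ in the colimit. I expect this last combinatorial injectivity to be the main obstacle: parts a) and b) are formal consequences of Proposition~\ref{strong} together with the logarithmic identity for $\hcfree$, whereas c) relies on the fact that strongly free monomials carry exactly the overlap–freeness the argument needs, which is where Anick's combinatorics of strongly free monomial sets must enter.
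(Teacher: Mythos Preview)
Your argument is correct and follows the same inductive route as the paper: parts a) and b) are handled identically via Proposition~\ref{strong} together with the logarithmic behaviour of $\hcfree$, and part c) rests on the same overlap--freeness of strongly free monomial sets that you anticipate. The only difference is in the execution of c): the paper works directly with an element $x\in\omega A\cap[A,A]$ lying in a single cyclic component, shows that all cyclic rotations of $\omega a$ are nonzero (else one produces an overlap contradicting strong freeness), and then writes $x$ explicitly as a sum of commutators $[\omega a_{i+1},c_{i+1}b_i]\in[\omega A,\omega A]$; your necklace--to--necklace injectivity argument is equivalent, but to make it airtight you should note that distinct target necklaces lie in independent cyclic components of $A_{j-1}$ (so ``distinct and nonzero'' really gives linear independence) and that a source necklace vanishes iff its image does, which holds because a block rotation in $(A_{j-1})^{\omega_j}$ carries the same sign as the corresponding letter rotation in $A_{j-1}$.
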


\begin{proof} The proof is by induction over the size of $\omega$, so suppose the claim is true for $A=T(V)/\ideal{\alpha}$ and that $\omega$ is a strongly free element in $A$ and put $B=A/A\omega A$. Then  a) is true for $B$ by Proposition \ref{strong}.  To prove b) we use the same proposition to get
$$
\rhc_0(B)-y\rhc_1(B)=\rhc_0(A)-y\rhc_1(A)-\rhc_0(A^\omega).
$$
Since $A^\omega$ is free on $s^\omega B$ and $B$ has series $1/(1-V+\alpha+\omega)$ by the above, we get by induction
$$
\rhc_0(B)-y\rhc_1(B)=\hcfree(V-\alpha)-\hcfree(\omega/(1-V+\alpha+\omega))
$$ 
We now use that $\hcfree(V)$ is logarithmic in $1-V$ and the fact that
$$
(1-V+\alpha)/(1-\omega/(1-V+\alpha+\omega))=1-V+\alpha+\omega
$$
to get 
$$
\rhc_0(B)-y\rhc_1(B)=\hcfree(V-\alpha-\omega)
$$
which proofs the induction step for b). The induction start, $\omega=0$, for a), b) and c) is true, since $\rhc_n(T(V))=0$ for $n>0$ and since 
$\rhc_0(T(V))=\hcfree(V)$ by definition of hcfree.

To prove the induction step for c) we will use Proposition \ref{strong} and prove that $\omega A\cap[A,A]\subseteq[\omega A,\omega A]$. 

Since $A$ is a free algebra modulo monomials it has a fine grading where the graded components consist of the nonzero monomials in $A$. Consider the coarser grading where the components (called {\em cyclic components}) consist of the direct sum of the cyclic permutations of a given monomial. Then $[A,A]$ respects this grading and hence $a\in[A,A]$ iff each cyclic component of $a$ is in $[A,A]$. 

Suppose $x$  is a nonzero element in $\omega A\cap[A,A]$ lying in a cyclic component $C$ and let 
$\omega a$ be a nonzero monomial  term in $x$. We claim that all cyclic permutations of $\omega a$ are nonzero, which implies that $C$ has dimension equal to the length of $\omega a$ and hence the cyclic 
operator $t$ circulates a basis for $C$. Here $t$ is defined on a monomial $x_1x_2\ldots x_n$ as
$$
t(x_1x_2\ldots x_n)=(-1)^{|x_1|(|x_2|+\cdots |x_n|)}x_2x_3\ldots x_nx_1
$$
Since $C\cap[A,A]=\im(1-t)$, the claim also implies that 
$$
x=\sum_{i=0}^{n-1}c_it^i(\omega a)\quad\text{where}\quad\sum_{i=0}^{n-1}c_i=0.
$$ 

Indeed, if the claim is not true, then there is an overlap of $\omega$  with a monomial in the set $\alpha$ which defines $A$ or $\omega$ is a submonomial of some monomial in $\alpha$ or $\omega$ overlaps itself. In each case it is easy to construct a nonzero element in $\ker(F)$; e.g., if $\alpha_1\alpha_2\in\alpha$, 
$\omega=\alpha_2\omega_2$ and $\alpha_1$ is nonzero as an element in $A/A\omega A$ then $F(s^\omega(\bar\alpha_1)s^\omega(1))=0$. 

Moreover, since $\omega$ is strongly free in $A$, the occurences of $\omega$ in $\omega a$ do not overlap, so we may write 
$$
\omega a=\omega a_1\omega a_2\ldots\omega a_r
$$
where for all $i$, $a_i$ does not contain $\omega$ as a submonomial. Since each monomial term in $x$ belongs to $\omega A$ we have ($n_0=0$):
\begin{align}\label{xx}
x=\sum_{i=0}^{r-1}d_{i}t^{n_i}(\omega a)\ \text{ where }\ \sum_{i=0}^{r-1}d_{i}=0\ \text{ and }\  n_i=\len(\omega a_1\omega a_2\ldots\omega a_i)
\end{align}
If $bc$ is a monomial and $\len(b)=m$, then it is easy to prove by induction over $m$ that 
$$
t^m(bc)=(-1)^{|b||c|}cb
$$
Hence, for $i=0$ to $r-1$, we have 
$$
t^{n_i}(\omega a)=(-1)^{|b_i||c_i|}c_ib_i
$$
where $b_i=\omega a_1\omega a_2\ldots\omega a_i$ and $c_i=\omega a_{i+1}a_{i+2}\ldots\omega a_r$.

Since one may compute $|b_i||c_i|+|b_{i+1}||c_{i+1}|$ to be 
$|\omega a_{i+1}|(|b_i|+|c_{i+1}|)$ it follows that
\begin{align*}
t^{n_i}(\omega a)-t^{n_{i+1}}(\omega a)&=(-1)^{|b_i||c_i|}(\omega a_{i+1}c_{i+1}b_i-
(-1)^{|\omega a_{i+1}|(|b_i|+|c_{i+1}|)}c_{i+1}b_i\omega a_{i+1})\\
&=(-1)^{|b_i||c_{i}|}[\omega a_{i+1},c_{i+1}b_i]\in[\omega A,\omega A]
\end{align*}

By (\ref{xx}) we have
$$ 
x=\sum_{i=0}^{r-1}(\sum_{j=0}^ic_j)(t^{n_i}(\omega a)-t^{n_{i+1}}(\omega a))
$$
and it follows that $x\in[\omega A,\omega A]$.

\end{proof} 

Observe that D. Anick has proved that a strongly free set of monomials is just a set of monomials such that no two monomials have a common beginning and ending and no monomial is contained in another monomial. Moreover, if there is an order of the variables such that the leading monomials of a set $\omega$ of homogeneous polynomials yield a strongly free set of monomials, then $\omega$ is strongly free.

\section{Generic quadratic forms}
As an application of the results in the previous section, we will compute the cyclic homology of a free algebra modulo some generic quadratic forms. We call some forms generic if the $N$ coefficients are algebraically independent, or that the coefficients defines a point in $k^N$ which lies in a non-empty Zariski-open subset of $K^N$. We will assume that the variables are odd (in which case the Koszul dual has even variables). 

Assume 
$\omega$ is a generic quadratic form in $T(V)=k\ideal{T_1,\ldots T_n}$ with $n\ge2$. Then $\omega$ is strongly free in $T(V)$, since the Hilbert series is minimal for a generic relation and there is an example of a strongly free form, namely  
$\omega=T_1T_2$. By Proposition \ref{freeset} we get
$$
\rhc_0(T(V)/(\omega))\ge\hcfree(nzy-z^2)
$$
But since $\rhc_0(T(V)/(\omega))=T(V)/[T(V),T(V)]+(\omega)$ it follows that the series for $\rhc_0(T(V)/(\omega))$ is the minimal possible in the generic case. Since by Proposition \ref{freeset}, $\rhc_0(T(V)/(T_1T_2)=\hcfree(nzy-z^2)$, we get 
\begin{prop} If $\omega$ is a generic quadratic form in $T(V)=k\ideal{T_1,\ldots T_n}$ with $n\ge2$, then
\begin{align*}\rhc_0(T(V)/(\omega))&=\hcfree(nzy-z^2)\\
  \rhc_n(T(V)/(\omega))&=0\quad\text{for}\quad n>0
\end{align*}
\end{prop}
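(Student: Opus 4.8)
The plan is to deduce the proposition from Proposition~\ref{freeset} by combining it with two genericity (upper semicontinuity) arguments, using the monomial $\omega_0=T_1T_2$ --- available precisely because $n\ge2$ --- as an explicit witness.

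First I would check that a generic quadratic $\omega$ is strongly free. As the variables $T_i$ are odd, a quadratic form has weight $2$ and sign degree $0$, hence series $z^2$, while $V$ has series $nzy$. For every quadratic $\omega$ the inequality~(\ref{ineq}) gives $T(V)/\ideal{\omega}\ge 1/(1-nzy+z^2)$ coefficientwise, with equality if and only if $\omega$ is strongly free. In each bidegree the dimension of $T(V)/\ideal{\omega}$ is the corank of a matrix whose entries are affine-linear in the $n^2$ coefficients of $\omega$, hence upper semicontinuous in those coefficients, so its minimum is attained generically; and this minimum equals the relevant coefficient of $1/(1-nzy+z^2)$, because it is attained at $\omega_0=T_1T_2$, which is strongly free (a length-two monomial with distinct first and last letters, cf.\ Anick's monomial criterion quoted above). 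Hence a generic $\omega$ satisfies $T(V)/\ideal{\omega}=1/(1-nzy+z^2)$ and is strongly free.

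Next I would apply Proposition~\ref{freeset} to the singleton strongly free set $\{\omega\}$ with $B=T(V)/\ideal{\omega}$: part (a) gives $\rhc_n(B)=0$ for $n\ge2$, and part (b) gives the identity of series $\rhc_0(B)=y\,\rhc_1(B)+\hcfree(V-\omega)=y\,\rhc_1(B)+\hcfree(nzy-z^2)$, so in particular $\rhc_0(B)\ge\hcfree(nzy-z^2)$ coefficientwise for every strongly free $\omega$. Applying the same proposition to $\omega_0=T_1T_2$, part (c) adds $\rhc_1(T(V)/\ideal{\omega_0})=0$, whence $\rhc_0(T(V)/\ideal{\omega_0})=\hcfree(nzy-z^2)$ exactly.

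Finally I would run the semicontinuity argument once more for $\rhc_0$ itself. Since $\rhc_0(T(V)/\ideal{\omega})=T(V)^+/([T(V),T(V)]+\ideal{\omega})$, each graded piece is again the cokernel of a matrix affine-linear in the coefficients of $\omega$, so its dimension is upper semicontinuous in those coefficients and attains its minimal value generically; for a generic $\omega$ we may also assume $\omega$ strongly free by the first step. On the one hand the minimum is $\le\hcfree(nzy-z^2)$, since this value is achieved at $\omega_0$ by the second step; on the other hand it is $\ge\hcfree(nzy-z^2)$, since every strongly free $\omega$ satisfies $\rhc_0(B)\ge\hcfree(nzy-z^2)$, again by the second step. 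Hence for a generic $\omega$ we have $\rhc_0(B)=\hcfree(nzy-z^2)$, and then the identity $\rhc_0(B)=y\,\rhc_1(B)+\hcfree(nzy-z^2)$ forces $\rhc_1(B)=0$; together with part (a) this yields $\rhc_n(T(V)/\ideal{\omega})=0$ for all $n>0$, proving the proposition. The only delicate point is the semicontinuity bookkeeping --- checking that ``strongly free'' and ``$\rhc_0$ minimal'' are generic conditions and that $\omega_0$ pins down the generic value of $\rhc_0$ --- but this needs nothing beyond ranks of matrices with polynomial entries; everything else is a direct appeal to Proposition~\ref{freeset} and the inequality~(\ref{ineq}).
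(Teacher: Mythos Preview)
Your proof is correct and follows essentially the same approach as the paper's: show a generic $\omega$ is strongly free by semicontinuity of the Hilbert series with $T_1T_2$ as witness, then use Proposition~\ref{freeset} together with semicontinuity of $\rhc_0$ (again with $T_1T_2$ as witness, where part (c) applies) to pin down $\rhc_0$ and hence force $\rhc_1=0$. You are simply more explicit than the paper about the upper-semicontinuity bookkeeping and about deducing $\rhc_1(B)=0$ from the series identity in part~(b).
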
 \endproof
See also Proposition \ref{genstr}, which is a generalization.

\vspace{10pt}
In the symmetric case we have the following
\begin{prop} Suppose $\omega$ is a generic symmetric quadratic form in $n$ odd variables of weight one, $a_1,\ldots,a_n$, where $n\ge3$, and let $A=k\ideal{a_1,\ldots,a_n}$ and $B=A/(\omega)$. Then
\begin{align*}\rhc_0(B)&=z^2+\hcfree(nzy-z^2)\\
 \rhc_1(B)&=yz^2\\
  \rhc_n(B)&=0\quad\text{for}\quad n>1
\end{align*}
\end{prop}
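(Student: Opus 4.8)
The plan is to reduce everything to the strongly--free machinery of Propositions \ref{freeset} and \ref{strong}, and then to one computation in the free algebra $T(V)=k\ideal{a_1,\dots,a_n}$. First I would check that a generic symmetric quadratic $\omega$ is strongly free in $T(V)$. As in the non--symmetric case this follows by exhibiting a single strongly free \emph{symmetric} example, because the series of $T(V)/(\omega)$ is always $\ge 1/(1-nzy+z^2)$ with equality precisely for the strongly free $\omega$, and the weight--wise dimensions are upper semicontinuous in the coefficients of $\omega$. Take $\omega_0=\sum_{i=1}^{n-1}(a_ia_{i+1}+a_{i+1}a_i)$: with the order $a_1<\dots<a_n$ its leading monomial is $a_na_{n-1}$, a single monomial whose first and last letters differ, hence a strongly free set of monomials, so by Anick's criterion $\omega_0$ is strongly free. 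Therefore $\omega$ is strongly free and $B$ has series $1/(1-nzy+z^2)$.

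Granting this, Proposition \ref{freeset}a) gives $\rhc_n(B)=0$ for $n\ge 2$, and Proposition \ref{freeset}b) gives $\rhc_0(B)=y\,\rhc_1(B)+\hcfree(nzy-z^2)$. So the whole statement reduces to the single assertion $\rhc_1(B)=yz^2$, for then $y\,\rhc_1(B)=y\cdot yz^2=z^2$ and the formula for $\rhc_0(B)$ follows.

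To compute $\rhc_1(B)$ I would use Proposition \ref{strong}c): since $\rhc_1(T(V))=0$, we get $s\,\rhc_1(B)\cong(\omega A\cap[A,A])/[\omega A,\omega A]$ with $A=T(V)$, and it suffices to show this quotient is one--dimensional, spanned by the class of $\omega$ (which lives in weight $2$ and even sign degree, so applying $s$ yields $yz^2$). That the class of $\omega$ is there and nonzero is easy: as the $a_i$ are odd, $[A,A]$ in weight $2$ is the naive symmetric part of $V\otimes V$ (the super-commutator of odd elements is the anticommutator), which contains the symmetric form $\omega$, while $[\omega A,\omega A]$ lives in weights $\ge 4$. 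The real content is that $\omega A\cap[A,A]\subseteq[\omega A,\omega A]$ in every weight $\ge 3$. It is perhaps illuminating that $B$ is in fact Koszul here (global dimension two, quadratic relation), so $\rhc_1(B)\cong\hho_2(B)=\ker\bigl(B\otimes k\omega\to B\otimes V\bigr)$ in the two--term Koszul complex; since $(c_{ij})$ is generically invertible, this kernel is $\omega\otimes Z(B)$ with $Z(B)$ the super--centre of $B$, and the claim becomes: $B$ has trivial super--centre when $n\ge 3$.

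The hard part is this weight--$\ge 3$ vanishing, and I would do it weight by weight, working in $A^+/[A,A]=$ cyclic words in $a_1,\dots,a_n$ (with the $\mathbb Z_2$--signs forced by the odd variables; recall the monomials $a_i^{\,w}$ are cyclically zero), the analogue for $A^\omega=T(s^\omega B)$ handling $[\omega A,\omega A]$. Concretely one shows the map $A_{w-2}\to(A^+/[A,A])_w$, $c\mapsto\overline{\omega c}$, is injective for $w\ge 3$. Weight $3$ is a direct check needing only $c_{ii}\neq 0$, and it fixes the pattern; weight $4$ is the first place $n\ge 3$ is used — there the system on a symmetric $(\mu_{ij})$ is $c_{ii}\mu_{jj}+c_{jj}\mu_{ii}=2c_{ij}\mu_{ij}$ together with $c_{ii}\mu_{jk}+c_{jk}\mu_{ii}=c_{ij}\mu_{ik}+c_{ik}\mu_{ij}$ for three distinct indices, and for generic $c$ this forces $\mu=0$ only when there are enough three--index equations, i.e. $n\ge 3$ (for $n=2$ one recovers exactly the surviving central elements of the $q=-1$ quantum plane $T(V)/(a_1a_2+a_2a_1)$). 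The obstacle is to make this uniform in $w$: I would try either a leading--cyclic--term analysis mirroring the proof of Proposition \ref{freeset}c) but with genericity of $(c_{ij})$ replacing the monomial--overlap bookkeeping, or a semicontinuity reduction to one conveniently chosen symmetric $\omega$ whose super--centre is computed to be $k$ directly. Once this is in hand, the pieces combine: $\rhc_n(B)=0$ for $n>1$, $\rhc_1(B)=yz^2$, and hence $\rhc_0(B)=z^2+\hcfree(nzy-z^2)$.
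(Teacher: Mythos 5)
Your reductions are exactly the paper's: strongly freeness of a generic symmetric $\omega$ via a strongly free symmetric example plus semicontinuity, then Proposition \ref{freeset}a),b) to kill $\rhc_n$ for $n\ge2$ and to reduce everything to the single claim $\rhc_1(B)=yz^2$, and Proposition \ref{strong}c) plus $\omega\in[A,A]$ for the lower bound $\rhc_1(B)\ge yz^2$. But the matching upper bound --- which you correctly isolate as ``$\omega A\cap[A,A]\subseteq[\omega A,\omega A]$ in every weight $\ge3$'' --- is the actual content of the proposition, and you do not prove it. You verify weights $3$ and $4$ and then name two possible strategies (``a leading--cyclic--term analysis \ldots or a semicontinuity reduction to one conveniently chosen symmetric $\omega$ whose super--centre is computed to be $k$ directly'') without carrying either out. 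As written this is a genuine gap, not a routine verification: the monomial--overlap argument of Proposition \ref{freeset}c) does not transfer to a non-monomial relation, and the semicontinuity you invoke is clean for $\rhc_0$ (a quotient of a fixed space by a subspace varying polynomially in the coefficients) but not obviously so for the quotient $(\omega A\cap[A,A])/[\omega A,\omega A]$ itself, where both the intersection and the subspace you divide by move with $\omega$.

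The paper closes exactly this gap by a different maneuver: instead of bounding $\rhc_1$ from above directly, it bounds $\rhc_0$ from above for one explicit example and lets the identity $\rhc_0(B)=y\,\rhc_1(B)+\hcfree(nzy-z^2)$ convert that into the bound on $\rhc_1$. Concretely, it reduces to $n=3$ using Proposition \ref{fei} (the free product with the remaining generators only shifts $\rhc_0$ by the predicted $\hcfree$ correction), takes $B=k\ideal{a,b,c}/([a,b]+c^2)$, and shows by an explicit rewriting computation (treating $ab\to-ba-c^2$ and tracking cyclic words) that a generating set of $\rhc_0(B)$ has series at most $z^2+\hcfree(3zy-z^2)$; characteristic zero enters at the final step, where a coefficient $-j$ must be invertible. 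Semicontinuity of $\rhc_0$ then transfers the equality to the generic symmetric form. If you want to salvage your route, the cleanest fix is to adopt this detour through $\rhc_0$ of a single well-chosen example rather than attempting the uniform-in-weight centre computation.
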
 
\begin{rmk} In fact, it is enough to assume that the symmetric form has rank at least three in the proposition above.
\end{rmk}
\begin{proof}
 As above it follows that $\omega$ is strongly free since $[a_1,a_2]$ is strongly free. Since $\omega\in [A,A]$, it follows that $\omega$ is a non-zero element in 
 $\omega A\cap[A,A]/[\omega A,\omega A]$ and hence by Proposition \ref{strong} $\rhc_1(B)\ge yz^2$ and by Proposition \ref{freeset}, 
$$
\rhc_0(B)\ge z^2+\hcfree(nzy-z^2)
$$
Hence, it is enough to give an example of a $B$ which attains the lower bound. We will consider $B=k\ideal{a,b,c}/([a,b]+c^2)$. We have that $[a,b]+c^2$ is strongly free in $k\ideal{a,b,c}$ since $ab$ is a strongly free leading monomial. It is enough to prove that 
$\rhc_0(B)=z^2+\hcfree(3zy-z^2)$, since then by Proposition \ref{fei},
\begin{align*}
\rhc_0(B*k\ideal{a_4,\ldots a_n})=&z^2+\hcfree(3zy-z^2)+\hcfree((n-3)zy)+\\
&\hcfree(B^+(n-3)zy/(1-(n-3)zy))
\end{align*}
But $B=1/(1-3zy+z^2)$ and 
\begin{align*}
&1-(3zy-z^2)(n-3)zy/(1-(n-3)zy)/(1-3zy+z^2)=\\
&\quad(1-nzy+z^2)/(1-3zy+z^2)/(1-(n-3)zy)
\end{align*} 
and then, since $\hcfree(V)$ is logarithmic in $1-V$,
$$
\rhc_0(B*k\ideal{a_4,\ldots a_n})=z^2+\hcfree(nzy-z^2)
$$
A basis for $\rhc_0(k\ideal{a,b}/ab)$ is easily seen to be $\{b^j,a^k;\ j \text{ and }k \text{ odd }\ge1\}$. By Proposition \ref{fei} a basis for 
$\rhc_0(k\ideal{a,b,c}/ab)$ is 
$$
\{b^j,a^k, c^l;\ j,k,l\text{ odd }\ge1\}\cup\{(b^{j_1}a^{k_1}c^{l_1})\ldots
(b^{j_r}a^{k_r}c^{l_r});\ j_i+k_i, l_i>0\}
$$
where the last set is taken modulo cyclic permutation of groups. The series for the basis is $\hcfree(3zy-z^2)$. Considering the relation $[a,b]+c^2$ as the re-write rule $ab\to -ba-c^2$ one can see that the elements in the set above together with 
$\{b^ja^k\}$ generate $\rhc_0(B)$. We will now prove that for any $j>0,k>0,j+k>2$ we either do not need the generator $b^ja^k$ or  not $b^{j-1}a^{k-1}c^2$. From this the result follows since then the series for $\rhc_0(B)$ is at most $z^2+\hcfree(3zy-z^2)$ which is also  the minimal possible value.

The following computation is done modulo $c^2$. We have,
\begin{align*}
b^ja^k=(-1)^{j+k-1}ab^ja^{k-1}=(-1)^{k-1}b^ja^k\\
b^ja^k=(-1)^{j+k-1}b^{j-1}a^{k}b=(-1)^{j-1}b^ja^k
\end{align*}
Hence, if $j$ or $k$ is even, then $b^ja^k$ may be expressed in terms of the generators containing $c^2$. Suppose now $j$ and $k$ are odd.  We have,
\begin{align*}
b^ja^k=-ab^ja^{k-1}=b^ja^k+\sum_{i=1}^j(-1)^ib^{i-1}c^2b^{j-i}a^{k-1}
\end{align*}
and
$$
b^{i-1}c^2b^{j-i}a^{k-1}=(-1)^{(i-1)(j-i+k-1)}b^{j-i}a^{k-1}b^{i-1}c^2
$$
But modulo $c^4$ we have 
$$
b^{j-i}a^{k-1}b^{i-1}c^2=(-1)^{(i-1)(k-1)}b^{j-1}a^{k-1}c^2
$$
whence
$$
(-1)^ib^{i-1}c^2b^{j-i}a^{k-1}=(-1)^{i+(i-1)(i-1)}b^{j-1}a^{k-1}c^2=-b^{j-1}a^{k-1}c^2
$$
and hence $-jb^{j-1}a^{k-1}c^2$ belongs to the ideal generated by $c^4$ in $k\ideal{a,b,c}$ which implies that $b^{j-1}a^{k-1}c^2$ is not needed as a generator ($\ch(k)=0$).

\end{proof}
\begin{rmk} The anti-symmetric case, i.e., when the variables are even, may be treated similarly (and more easily) by studying $\omega=[a,b]+[c,d]$ in $k\ideal{a,b,c,d}$. There might however be some more exceptional cases in three variables. 
\end{rmk}
The results are easy to generalize to cover cases with several (but not too many) generic relations. We have,
\begin{prop}\label{genstr} Suppose there is a strongly free set of monomials in $T(V)$ with series $\omega(z,y)$. Then for any set $g$ of generic polynomials with series $\omega(z,y)$ we have,
\begin{align*}\rhc_0(T(V)/(g))&=\hcfree(V-\omega)\\
  \rhc_n(T(V)/(g))&=0\quad\text{for}\quad n>0
\end{align*}
\end{prop}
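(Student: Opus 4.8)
The plan is to establish that being \emph{strongly free} is a generic condition, apply Proposition \ref{freeset} to reduce everything to understanding $\rhc_0$ and $\rhc_1$, and then use genericity a second time to force $\rhc_1$ to vanish.

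First I would record the basic semicontinuity fact. Regard the coefficients of the polynomials in $g$ as parameters. For each weight $q$, the homogeneous component $(g)_q\subseteq T(V)_q$ of the two-sided ideal is the linear span of the products $u\,g_i\,v$ with $u,v$ monomials, so $(g)_q$ is the image of a linear map whose matrix depends linearly on the parameters; hence $\dim\bigl([T(V),T(V)]_q+(g)_q\bigr)$ is lower semicontinuous in the parameters and is maximal on a nonempty Zariski-open set. Consequently, for generic $g$ and in each degree, the Hilbert series of $T(V)/(g)$ and the series of $\rhc_0(T(V)/(g))=T(V)^+/\bigl([T(V),T(V)]+(g)\bigr)$ are as small as possible among all relation sets having the prescribed series $\omega(z,y)$.

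Now let $m$ be a strongly free set of monomials in $T(V)$ with series $\omega(z,y)$, which exists by hypothesis. Because $m$ is strongly free, the equality criterion in the discussion after (\ref{ineq}) shows that $T(V)/(m)$ attains the minimal Hilbert series $1/(1-V+\omega)$. By the generic minimality just noted, $T(V)/(g)$ then has Hilbert series $1/(1-V+\omega)$ for generic $g$, and so, again by that criterion, a generic $g$ is itself a strongly free set. Hence Proposition \ref{freeset} applies to generic $g$: part a) gives $\rhc_n(T(V)/(g))=0$ for $n\ge2$, and part b) gives
$$
\rhc_0(T(V)/(g))=y\,\rhc_1(T(V)/(g))+\hcfree(V-\omega),
$$
so in each degree $\dim\rhc_0(T(V)/(g))\ge\dim\hcfree(V-\omega)$. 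On the other hand $m$ consists of monomials and is strongly free, so part c) of Proposition \ref{freeset} gives $\rhc_1(T(V)/(m))=0$ and then part b) gives $\rhc_0(T(V)/(m))=\hcfree(V-\omega)$; combined with the generic minimality of $\dim\rhc_0(T(V)/(g))$ this yields $\dim\rhc_0(T(V)/(g))\le\dim\hcfree(V-\omega)$ in each degree. The two inequalities force $\rhc_0(T(V)/(g))=\hcfree(V-\omega)$, and then the displayed identity gives $y\,\rhc_1(T(V)/(g))=0$, i.e. $\rhc_1(T(V)/(g))=0$. Together with the vanishing for $n\ge2$ this is the claim.

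The only delicate point — and the main, if modest, obstacle — is the genericity bookkeeping: one must verify that a single ``generic'' set of parameters can be chosen so that $\dim\bigl([T(V),T(V)]_q+(g)_q\bigr)$ is simultaneously maximal in all relevant weights $q$ (in each weight this is a finite, hence Zariski-open, condition, so at worst one intersects countably many Zariski-opens, which is harmless for the notion of genericity used here), and that the monomial set $m$ is a legitimate witness for the minimality of both the Hilbert series and the $\rhc_0$-series. Granting this, the proof is a direct application of Proposition \ref{freeset} together with the strongly-free equality criterion already established in the excerpt.
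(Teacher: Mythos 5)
Your proposal is correct and follows essentially the same route as the paper: use the strongly free monomial set as a witness that the generic $g$ attains the minimal Hilbert series (hence is strongly free), then use it again via parts b) and c) of Proposition \ref{freeset} to pin down $\rhc_0$ and force $\rhc_1=0$. The only difference is that you spell out the semicontinuity/genericity bookkeeping that the paper leaves implicit, which is a reasonable addition.
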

\begin{proof}
Put $B=T(V)/(g)$. We have $B\ge1/(1-V+\omega)$. By assumption, the minimal value is attained and hence $B=1/(1-V+\omega)$ since $g$ is generic. It follows that  $g$ is a strongly free set. Also $\rhc_0(B)$ is minimal in the generic case and by Proposition \ref{freeset}, $\rhc_0(B)\ge\hcfree(V-\omega)$. Again, this minimal value is attained by assumption  and Proposition \ref{freeset}, hence it is also the series in the general case. Moreover, Proposition \ref{freeset} also gives that $\rhc_n(B)=0$ for $n>0$.
\end{proof}
\begin{prop}
Suppose $\omega_1,\ldots,\omega_r$ are $r$ generic symmetric quadratic forms in $n$ odd variables of weight one, where $r\le jk$ and $j+k+jk\le n$ for some positive integers $j,k$ and let $A=k\ideal{a_1,\ldots,a_n}$ and 
$B=A/(\omega_1\ldots,\omega_r)$. Then 
\begin{align*}\rhc_0(B)&=rz^2+\hcfree(nzy-rz^2)\\
 \rhc_1(B)&=ryz^2\\
  \rhc_n(B)&=0\quad\text{for}\quad n>1
\end{align*}
\end{prop}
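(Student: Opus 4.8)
The plan is to follow the pattern of the two preceding propositions: show the generic forms are strongly free, read off all of $\rhc$ except $\rhc_1$ from Proposition~\ref{freeset}, and pin $\rhc_1(B)=ryz^{2}$ between a lower bound (symmetric forms lie in $[A,A]$) and an upper bound computed in one explicit example.

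First I would produce the example and the strong-freeness. Choose $j,k\ge1$ with $r\le jk$ and $j+k+jk\le n$, label $j+k+jk$ of the variables $x_1,\dots,x_j$, $y_1,\dots,y_k$ and $z_{ab}$ ($1\le a\le j,\ 1\le b\le k$), pick $r$ distinct pairs $(a_1,b_1),\dots,(a_r,b_r)$, and put $\omega_i^{0}=[x_{a_i},y_{b_i}]+z_{a_ib_i}^{2}$. With the order $x_1>\dots>x_j>y_1>\dots>y_k>z_{11}>\dots$ the leading monomial of $\omega_i^{0}$ is $x_{a_i}y_{b_i}$, and $\{x_{a_1}y_{b_1},\dots,x_{a_r}y_{b_r}\}$ is a strongly free set of monomials (distinct quadratic monomials whose first and second letters lie in disjoint variable sets, hence with no overlaps and no inclusions); so by Anick's criterion quoted after Proposition~\ref{freeset} the $\omega_i^{0}$ form a strongly free set. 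Since the Hilbert series of $T(V)/(g)$ is minimal for generic $g$, every generic set $g$ of $r$ symmetric quadratic forms is likewise strongly free, and $B$ has series $1/(1-nzy+rz^{2})$. Exactly as in the $r=1$ argument, the $n-(j+k+jk)$ unused variables together with the $z_{ab}$ not occurring above are free generators: by Proposition~\ref{fei} they do not affect $\rhc_m$ for $m>0$, and their contribution to $\rhc_0$ is absorbed by the logarithmic identity for $\hcfree$; so it suffices to treat the core algebra $C$ on the $j+k+r$ variables that actually occur, with relations $\omega_1^{0},\dots,\omega_r^{0}$, whose Hilbert series is $1/(1-(j+k+r)zy+rz^{2})$.

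By Proposition~\ref{freeset}(a) (the $\omega_i^{0}$ being strongly free) $\rhc_m(C)=0$ for $m\ge2$, and by Proposition~\ref{freeset}(b), $\rhc_0(C)=y\rhc_1(C)+\hcfree((j+k+r)zy-rz^{2})$, so it all comes down to $\rhc_1(C)=ryz^{2}$. For the lower bound, build $C$ one relation at a time, $A_0=T(V_0)$, $A_m=A_{m-1}/A_{m-1}\omega_m^{0}A_{m-1}$ ($m=1,\dots,r$); each $\omega_m^{0}$ is strongly free in $A_{m-1}$, and being a symmetric quadratic form it lies in $[A_{m-1},A_{m-1}]$, so in the exact sequence of Proposition~\ref{strong}(b) the generator $s^{\omega_m^{0}}1$ of the free algebra $A_{m-1}^{\omega_m^{0}}$ — the unique one in weight two — maps to $0$ under $\rhc_0(A_{m-1}^{\omega_m^{0}})\to\rhc_0(A_{m-1})$. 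Hence $\rhc_1(A_m)$ exceeds $\rhc_1(A_{m-1})$ by one dimension of sign degree one in weight two, and $\rhc_1(C)\ge ryz^{2}$, so also $\rhc_0(C)\ge rz^{2}+\hcfree((j+k+r)zy-rz^{2})$.

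The main obstacle is the matching upper bound $\rhc_0(C)\le rz^{2}+\hcfree((j+k+r)zy-rz^{2})$; equivalently, that the kernel just discussed is exactly one-dimensional at each stage, i.e. (using that $\omega_m^{0}$ is a non-zerodivisor, as in the proof of Proposition~\ref{strong}(c)) that
$$
\omega_m^{0}A_{m-1}\cap[A_{m-1},A_{m-1}]=k\,\omega_m^{0}\ \oplus\ [\omega_m^{0}A_{m-1},\,\omega_m^{0}A_{m-1}].
$$
This is the analogue of the monomial computation in Proposition~\ref{freeset}(c), except that $A_{m-1}$ is now a free algebra modulo a confluent system of rewriting rules $x_{a_i}y_{b_i}\mapsto -y_{b_i}x_{a_i}-z_{a_ib_i}^{2}$ rather than modulo monomials, so the clean ``cyclic component'' decomposition used there is unavailable: one must run these rewritings on cyclic words, identify which of them become redundant, and check that any surviving relation forces a $z_{a_ib_i}^{4}$-term which in characteristic zero kills the offending generator — in the spirit of the $b^{j}a^{k}$ computation in the previous proposition. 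Comparing the two bounds then gives $\rhc_0(C)=rz^{2}+\hcfree((j+k+r)zy-rz^{2})$, whence $y\rhc_1(C)=rz^{2}$ and $\rhc_1(C)=ryz^{2}$; Proposition~\ref{fei} and the logarithmic identity deliver the asserted $\rhc_0(B)$, $\rhc_1(B)$, and $\rhc_m(B)=0$ for $m>1$ (with genericity lifting the computation from $C$ to arbitrary generic forms, exactly as before). The delicate points are the rewriting bookkeeping with the correct Koszul signs, and making sure the economical packing of the $x$'s, $y$'s and $z$'s — rather than $3r$ disjoint triples, which the hypotheses need not supply — really works throughout the whole range $r\le jk$, $j+k+jk\le n$.
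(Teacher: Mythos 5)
Your proposal is correct and takes essentially the same approach as the paper: the paper's entire proof consists of exhibiting the forms $\omega_{m,n}=[a_m,b_n]+c_{m,n}^2$ in $k\ideal{a_1,\ldots,a_j,b_1,\ldots,b_k,c_{1,1},\ldots,c_{j,k}}$ and leaving the rest to the reader by analogy with the one-form case, and your example, strong-freeness argument, lower bound from $\omega_i\in[A,A]$, and reduction via Proposition \ref{fei} reconstruct exactly that intended argument. You in fact supply more detail than the paper does; the one step you flag as an obstacle (the rewriting computation giving the matching upper bound on $\rhc_1$ in the example) is likewise left implicit in the paper.
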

\begin{proof}
Consider $\omega_{m,n}=[a_m,b_n]+c_{m,n}$ for $m=1,\ldots,j$ and $n=1,\ldots,k$ in 
$k\ideal{a_1,\ldots,a_j,b_1,\ldots,b_k,c_{1,1},\ldots,c_{j,k}}$. 
 
\end{proof}
Taking Koszul dual, we obtain a result for quotients of a polynomial ring by sufficiently many generic quadratic relations.
\begin{prop} Suppose $f_1,\ldots,f_t$ are generic quadratic relations in $A=k[x_1,\ldots,x_n]$, where $x_i$ are even variables of weight one and $t={n+1\choose2}-r$ and where $r\le jk$, $j+k+jk\le n$ for some positive integers $j,k$. Let $R=A/(f_1,\ldots,f_t)$ and $\hcfree(nzy-rz^2)=\sum_{i\ge1}a_iz^i+y\sum_{i\ge1}b_iz^i$. Then
\begin{align*}\rhc(R)&=rz^2+rxyz^2+\sum_{i\ge1}b_ix^{i-1}z^i+y\sum_{i\ge1}a_ix^{i-1}z^i
\end{align*}
\end{prop}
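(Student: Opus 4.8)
The plan is to realise $R$ as the Koszul dual of a generic instance of the algebra $B$ of the preceding proposition (the one on $r$ generic symmetric quadratic forms in $n$ odd variables), and then transport the value of $\rhc(B)$ through the Koszul-duality formula (\ref{hckos}).

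First I would identify $R^!$. Let $V^*$ denote the span of the even weight-one variables $x_1,\ldots,x_n$ and $V$ the span of $n$ odd weight-one variables. As recorded after (\ref{kos}), $k[x_1,\ldots,x_n]=T(V^*)/(N)$, where $N\subseteq V^*\otimes V^*$ is the $\binom{n}{2}$-dimensional span of the commutators; its annihilator $N^{\perp}=S\subseteq V\otimes V$ is the $\binom{n+1}{2}$-dimensional space of symmetric quadratic forms in $V$, and $k[x_1,\ldots,x_n]^!$ is the exterior algebra on $V$. Let $F\supseteq N$ be the preimage in $V^*\otimes V^*$ of the span of $f_1,\ldots,f_t$; then $R=T(V^*)/(F)$ with $\dim F=\binom{n}{2}+t=n^2-r$, and therefore
$$
R^!=T(V)\big/\bigl(F^{\perp}\bigr),\qquad F^{\perp}\subseteq N^{\perp}=S,\qquad \dim F^{\perp}=r.
$$
Since the $f_i$ are generic, $W:=F^{\perp}$ is a generic $r$-dimensional subspace of $S$; thus $R^!=T(V)/(W)$ is exactly the algebra $B$ of the preceding proposition, whose hypotheses $r\le jk$, $j+k+jk\le n$ are the ones assumed here.

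The one point that is not merely formal is that $B$, equivalently $R$, is Koszul, which is what makes (\ref{hckos}) --- and, degreewise, (\ref{kosrel}) --- applicable. Here I would use the explicit algebra $B_0$ built in the proof of the preceding proposition, with relations $[a_m,b_n]+c_{m,n}^2$: in a term order in which the $a$'s dominate, the leading monomials $a_mb_n$ involve pairwise disjoint variables, hence overlap neither one another nor themselves; there are thus no overlap relations, so these relations already form a quadratic noncommutative Gr\"obner basis and $B_0$ is Koszul. As $B_0$ realises the minimal Hilbert series $1/(1-nzy+rz^2)$ --- the one attained generically --- and the Betti numbers $\dim\tor^{(\cdot)}_{i,j}(k,k)$ are upper semicontinuous on the (flat) locus of quadratic algebras with that Hilbert series, the generic $B$ has its Betti table concentrated on the diagonal and is Koszul as well. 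This transfer of Koszulness from the Gr\"obner-basis example to the generic algebra is the step that needs the most care; the rest is bookkeeping. (Alternatively one could work with $R_0:=B_0^{!}$ alone, compute $\rhc(R_0)$ as below, and then deduce the generic case from upper semicontinuity of $\dim\rhc_{n,q,\epsilon}$ and invariance of Euler characteristics on the flat locus, $\rhc(R_0)$ turning out to be concentrated in a single homological degree in each bidegree.)

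Finally I would run the series computation. By the preceding proposition, and writing $\hcfree(nzy-rz^2)=\sum_{i\ge1}a_iz^i+y\sum_{i\ge1}b_iz^i$ as in the statement, the three-variable series is
$$
\rhc(B)(x,z,y)=rz^2+rxyz^2+\sum_{i\ge1}a_iz^i+y\sum_{i\ge1}b_iz^i,
$$
where $rxyz^2$ is the contribution of $\rhc_1(B)=ryz^2$ and every $\rhc_n(B)$ with $n\ge2$ vanishes. Since $R=B^!$ with $B$ Koszul, (\ref{hckos}) gives $\rhc(R)(x,z,y)=\frac{y}{x}\,\rhc(B)(1/x,xz,y)$; carrying out the substitution and using $y^2=1$ sends the four summands to $rxyz^2$, $rz^2$, $y\sum_{i\ge1}a_ix^{i-1}z^i$, and $\sum_{i\ge1}b_ix^{i-1}z^i$ respectively, and their sum is the asserted formula.
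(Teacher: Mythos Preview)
Your approach is exactly the one the paper intends: the paper's entire argument is the sentence ``Taking Koszul dual, we obtain a result for quotients of a polynomial ring by sufficiently many generic quadratic relations,'' i.e., identify $R^{!}$ with the algebra $B$ of the preceding proposition and apply (\ref{hckos}). Your identification of $R^{!}$ and the final series bookkeeping are correct; in particular you supply a justification of Koszulness (via the quadratic Gr\"obner basis for the explicit model $B_0$ and semicontinuity) that the paper leaves implicit, so your write-up is in fact more complete than the paper's on this point.
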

\noindent\textbf{Example}
Let $R=k[x_1,\ldots,x_7]/(f_1,\ldots,f_{25})$ where $f_1,\ldots,f_{25}$ are generic quadratic polynomials in the even variables $x_1,\ldots,x_7$. Then
$$
\hcfree(7yz-3z^2)=18 z^2 + 465 z^4 + \ldots +y (7 z + 98 z^3 + 2401 z^5+\ldots)
$$
and
$$
\rhc(R)=7z+3z^2+21yxz^2+98x^2z^3+465yx^3z^4+2401x^4z^5+\ldots
$$

\vspace{20pt}
\noindent \textbf{Exceptional cases}: 
\begin{align*}
&A_0=k\ideal{T_1,\ldots,T_n}/(T_1^2)\quad T_i\quad\text{even }\\
&A_1=k\ideal{T_1,\ldots,T_n}/(T_1^2)\quad T_i\quad\text{odd }\\
&B_0=k\ideal{T_1,\ldots,T_n}/([T_1,T_2])\quad T_i\quad\text{even }\\
&B_1=k\ideal{T_1,\ldots,T_n}/([T_1,T_2])\quad T_i\quad\text{odd }\\
\\
&\rhc(A_0)(x,y,z)=z/(1-x^2z^2)+\hcfree((n-1)z+(n-1)z^2) \\
&\rhc(A_1)(x,y,z)=yz/(1-xz)+\hcfree((n-1)yz+(n-1)z^2) \\
&\rhc(B_0)(x,y,z)=z^2/(1-z)^2+yxz^2/(1-z)^2+\hcfree(nz-z^2) \\
&\rhc(B_1)(x,y,z)=z^2/(1-z^2)^2+2yz/(1-z^2)+yxz^2/(1-z^2)^2+\\
&\hspace{3.2cm}+\hcfree(nyz-z^2) 
\end{align*}
\begin{proof}
For each case we start to assume $n=1$ for $A_0,A_1$ and $n=2$ for $B_0,B_1$. Except for $B_0$ we first compute the series for the Koszul dual. We have that $A_0^!$ is free on one odd generator hence, $\rhc(A_0^!)=yz/(1-z^2)$ and by (\ref{hckos}), 
$\rhc(A_0)=z/(1-x^2z^2)$. 

Now, by Proposition \ref{fei} we get for general $n$,
\begin{align*}
&\rhc(k\ideal{T_1}/(T_1^2)*k\ideal{T_2,\ldots,T_n})=z/(1-x^2z^2)+\hcfree((n-1)z)+\\  &\hcfree(z(n-1)z/(1-(n-1)z))
\end{align*}
but 
$$
(1-(n-1)z)(1-(n-1)z^2/(1-(n-1)z))=1-(n-1)z-(n-1)z^2
$$
whence the result, since $\hcfree(V)$ is logarithmic in $1-V$.

The proof of $A_1$ is similar. The algebra $B_0$ for $n=2$ is a commutative polynomial ring in two even variables, hence by (\ref{hkr}), $\hho(B_0)=(1+xyz)^2/(1-z)^2$. Now by (\ref{cyc}), 
$$
\rhc(B_0)=((1+xyz)^2/(1-z)^2-1)/(1+xy)=(2z-z^2+xyz^2)/(1-z)^2.
$$
Observe that $1+xy=y(x+y)$ so in order to divide a polynomial $p(x)$ by $1+xy$ one may instead perform the division of $p(x)$ by $x+y$ (and check divisibility by putting $x=-y$).

We have for a general $n$,
\begin{align*}
&\rhc(k\ideal{T_1,T_2}/([T_1,T_2])*k\ideal{T_3,\ldots,T_n})=
(2z-z^2+xyz^2)/(1-z)^2+\\
&\hcfree((n-2)z)+  
\hcfree((1/(1-z)^2-1)(n-2)z/(1-(n-2)z))
\end{align*} 
but
$$
(1-(n-2)z)(1-(2z-z^2)(n-2)z/(1-z)^2/(1-(n-2)z))=(1-nz+z^2)/(1-z)^2
$$
whence, 
\begin{align*}
&\hcfree((n-2)z)+  
\hcfree((1/(1-z)^2-1)(n-2)z/(1-(n-2)z))=\\
&\hcfree(nz-z^2)-2\hcfree(z)
\end{align*}
and the result for $B_0$ follows since, $\hcfree(z)=z/(1-z)$.

For $B_1$ and $n=2$ the Koszul dual is $B_1^!=k[a]/(a^2)\otimes k[b]/(b^2)$ where $a,b$ are even. Hence by (\ref{tensor}), $\hho(B_1^!)=(\hho(k[a]/(a^2)))^2$ and by the result for $A_0$,
$$
\hho(k[a]/(a^2))=1+(1+xy)\rhc(k[a]/(a^2))=1+(1+xy)z/(1-x^2z^2)
$$
Hence,
$$
\hho(B_1^!)=1+(1+xy)^2z^2/(1-x^2z^2)^2+2(1+xy)z/(1-x^2z^2)
$$
and then by (\ref{cyc})
$$
\rhc(B_1^!)=(1+xy)z^2/(1-x^2z^2)^2+2z/(1-x^2z^2)
$$
and finally by (\ref{hckos})
\begin{align*}
\rhc(B_1)&=y/x((1+y/x)x^2z^2/(1-z^2)^2+2xz/(1-z^2))=\\
&=yxz^2/(1-z^2)^2+z^2/(1-z^2)^2+2yz/(1-z^2)
\end{align*}
A similar computation as for $B_0$ now gives the result for $B_1$ for general $n$.
\end{proof}

\vspace{20pt}\noindent
Clas L\"ofwall\\
Gim\aa{}gatan 5\\
12848 Bagarmossen\\
Sweden

\vspace{10pt}\noindent
e-mail:\\
clas.lofwall@gmail.com


\begin{thebibliography}{10}
\bibitem{anick} D. Anick, {Non-Commutative Graded Algebras and Their Hilbert Series}, J. of Algebra, {\bf 78}, (1982), 120--140. 
\bibitem{feigin} B. Feigin and B. Tsygan, Additive K-theory, In: Manin Y.I. (eds) K-Theory, Arithmetic and Geometry,
Springer Lecture Notes in Mathematics, vol 1289, Springer, Berlin, Heidelberg.
\end{thebibliography}
\end{document}